\newtheorem{theorem}{Theorem}[section]
\newtheorem{lemma}{Lemma}[section]
\newtheorem{proposition}{Proposition}[section]
\numberwithin{equation}{section}
\newenvironment{proof}{{\noindent \bf Proof.}}{\hfill $\Box$}
\newenvironment{proof3.2}{{\noindent \bf Proof of Theorem 3.2.}}{\hfill $\Box$}
\newenvironment{proof4.2}{{\noindent \bf Proof of Theorem 4.2.}}{\hfill $\Box$}
\begin{document}

\setlength{\baselineskip}{16pt}{\setlength\arraycolsep{2pt}}

\title{Continuous data assimilation for the three dimensional primitive equations with magnetic field}

\author{Yongqing Zhao$^1$,\ \ Wenjun Liu$^{1,2}$\footnote{Corresponding author. \ \   Email address: wjliu@nuist.edu.cn (W. J. Liu).}, \ \ Guangying Lv$^{1,2}$\ \ and   \ Yuepeng Wang$^{1,2}$  \\
    \medskip\\
    1. School of Mathematics and Statistics, Nanjing University of Information  \\
	Science and Technology, Nanjing 210044, China
	\medskip\\
	2.  Jiangsu International Joint Laboratory on System Modeling and Data Analysis, \\Center for Applied Mathematics of Jiangsu Province, Nanjing University \\ of Information Science and Technology, Nanjing 210044, China
	%$\dag$ These authors contributed equally to this work.\\ \small
}

\date{}
\maketitle

\begin{abstract}
% In this paper, we consider the continuous data assimilation for the three dimensional primitive equations with magnetic field on a thin domain. We establish the well-posedness of the assimilation system and show that the $H^2$-strong solutions of the assimilation system will converge to the reference solution at an exponential rate in the sense of $L^2$ as $t\rightarrow \infty$. We also study the sensitivity analysis for the assimilation system, by proving that a sequence of solutions of difference quotients equations converges to a unique solution of the formal sensitivity equations.

In this paper, the problem of continuous data assimilation of three dimensional primitive equations with magnetic field in thin domain is studied. We establish the well-posedness of the assimilation system and prove that the $H^2$-strong solution of the assimilation system converges exponentially to the reference solution in the sense of $L^2$ as $t\rightarrow \infty$. We also study the sensitivity analysis of the assimilation system and prove that a sequence of solutions of the difference quotient equation converge to the unique solution of the formal sensitivity equation.
\end{abstract}

\noindent {\bf 2010 Mathematics Subject Classification:} 35Q35, 34D06, 37C50. \\
\noindent {\bf Keywords:}
Well-posedness,
data assimilation, sensitivity analysis.

\maketitle

\section{Introduction }

Continuous data assimilation is one of the classical data assimilation methods, which is to apply the time continuous and space discrete observation data to the assimilation process.
In 2014, based on the background of 2D Navier-Stokes equations, Azouani, Olson and Titi \cite{da2,da3} gave a new algorithm of continuous data assimilation inspired by ideas from control theory. In this new algorithm, they introduced a feedback control term into the original system.
\cite{da6, da9, da13, da17} also studied different systems separately by using finite parameter feedback control algorithms.
Let us consider the reference system
\begin{align}
&\frac{du}{dt}=F(u),\label{5.1.1}\\
&u(0)=u_0,\label{5.1.2}
\end{align}
Solutions of the system are called the reference solutions.
A main difficulty in obtaining the reference solution is that we largely do not know the initial data $u_0$. Under the assumption that there are no observation errors and the observation grid resolution is $h$, a linear interpolation operator $I_h$ and a relaxation (nudging) parameter $\beta$ were introduced into the system in \cite{da2,da3}. Thus, we obtain the corresponding assimilation system
\begin{align}
&\frac{d\tilde{u}}{dt}=F(\tilde{u})-\beta(I_h(u)-I_h(\tilde{u})),\label{5.1.3}\\
&\tilde{u}(0)=\tilde{u}_0,\label{5.1.4}
\end{align}
where $\tilde{u}_0$ is an arbitrary initial data. The solution of assimilation system \eqref{5.1.3}-\eqref{5.1.4} is called the assimilation solution.
The assimilation system \eqref{5.1.3}-\eqref{5.1.4} obviously eliminates the need for complete initial data and incorporates incoming data into simulations as well.
\cite{da2,da3} showed that the assimilation solution converges exponentially to the reference solution as time $t$ approaches infinity, under certain conditions on the spatial scale $h$ and the relaxation parameter $\beta$. The continuous data assimilation method introduced in \cite{da2,da3} is also called the data assimilation for simplicity. After the work in \cite{da2,da3}, the data assimilation method was also used to study the data assimilation problem for the different systems \cite{da1,da4,da10,da11,da18,da20,da21}.
%Furthermore, discrete data assimilation was studied in \cite{da16,da19}.

Both the primitive equations and the Navier-Stokes equations are widely studied in various aspects. Cao and Titi \cite{pe1} achieved an important result on the global well-posedness of strong solutions for the three-dimensional primitive equations in a general cylindrical domain. Several works addressed the continuous data assimilation problem for the 2D Navier-Stokes equations \cite{da32,da25,da26,da12,da5,da8,da7,da30}. In 2021, Carlson and Larios \cite{da27,da29} performed parameter recovery for the 2D Navier-Stokes equations and further examined the sensitivity of their recovery algorithm by proving that a sequence of difference quotients converges to a unique solution of the sensitivity equations. Li and Titi \cite{pe4} established the small aspect ratio limit of the Navier–Stokes equations to the primitive equations in 2019. In \cite{da23,da31}, Korn and Pei investigated data assimilation for the primitive equations of ocean dynamics, respectively.

The magnetohydrodynamic (MHD) equations, which combine the Navier-Stokes equations of fluid dynamics and Maxwell’s equations of electromagnetism, have much richer structure than the Navier-Stokes equations. Duvaut and Lions \cite{pe2} established the existence and uniqueness results for weak and strong solutions of the 2D MHD equations. Biswas and Hudson \cite{da14} investigated the exponential convergence of continuous data assimilation for the 2D MHD equations in 2021. In \cite{pe3}, Du and Li analyzed the 3D incompressible MHD equations by using the scaling technique to derive the primitive equations with magnetic field (PEM) on thin 3D domains, and proved the global existence and uniqueness of strong solutions for this PEM without any small assumption on the initial data. In \cite{pe8}, they further proved that the global strong solutions of the scaled MHD equations strong converge to the global strong solutions of the PEM.
%In the 3D case, whether the solutions can develop finite-time singularities even if the initial value is sufficiently smooth remains unknown currently.

%Since the thin 3D domain is similar to a 2D domain, we use the good properties of the 2D MHD equations to study the continuous data assimilation for 3D PEM in thin domains.

%Following \cite{pe3,pe4,pe8}, we use the scaling technique to analyze the 3D incompressible MHD equations and derive the PEM with viscous and magnetic diffusivity coefficients on thin 3D domains.

In this paper, we continue the  work of  Du and Li \cite{pe3,pe8} to further investigate the continuous data assimilation for three-dimensional PEM on the 3D  thin domains.
First, we obtain some uniform estimates to ensure the well-posedness of both the reference system and the assimilation system. Here, to obtain $L^4$-uniform estimates, we use the governing equation for Els\"asser variables to ensure that the nonlinear terms on the right-hand side of the energy inequality are absorbed by the left-hand side. For the assimilation system, we further estimate the assimilation terms using the properties of the linear interpolation operator $I_h$ and the incompressibility conditions.
Then, we apply the uniformly bounded estimates obtained previously to show that the solution of the assimilation system exponentially converges to that of the reference system.
Lastly, we study the sensitivity analysis problem for the assimilation system and establish the well-posedness of the formal sensitivity equations by proving that a sequence of solutions of the difference quotients equations converge to a unique solution of the formal sensitivity equations.

We organize the rest of this paper as follows. In Section 2, we give the mathematical formulation of primitive equations with magnetic data assimilation system. In Section 3, we first obtain the well-posedness of the PEM and its assimilation system, and then we get the convergence relation between the reference solution and the assimilation solution. In Section 4, we study the sensitivity analysis problem for the assimilation equations.

\section{Mathmatical Model}

We first consider the incompressible three-dimensional MHD equations in $\Omega_{\varepsilon}:=M\times(-\varepsilon,\varepsilon)$, where $\varepsilon>0$ is a small parameter, and $M=(0,L_1)\times(0,L_2)$ for two positive constants $L_1$ and $L_2$ of order $O(1)$ with respect to $\varepsilon$.
We notice the anisotropic MHD system
\begin{align*}
&\partial_t{u'}+(u'\cdot\nabla_3)u'+\nabla_3{p}-(b'\cdot\nabla_3)b'-\mu\Delta{u'}-\nu_1\partial_z^2{u'}=0, \\
&\partial_t{b'}+(u'\cdot\nabla_3)b'-(b'\cdot\nabla_3)u'-\kappa\Delta{b'}-\sigma_1\partial_z^2{b'}=0, \\
&\nabla_3\cdot{u'}=0,\ \nabla_3\cdot{b'}=0,
\end{align*}
where$$u'=(u,u_3),\ b'=(b,b_3),\ u=(u_1,u_2),\ b=(b_1,b_2),$$
and $u, b$ are the horizontal velocity and magnetic field, respectively, $u_3, b_3$ are vertical velocity and magnetic field. The pressure $p$ is scalar function. Throughout this paper, we set $\nabla=(\partial_x, \partial_y)$, $\Delta=\partial^2_{x}+\partial^2_{y}$, and $\nabla_3$ and $\Delta_3$ as their three-dimensional counterparts, respectively. Without loss of generality, we assume that $\nu_1=\nu\varepsilon^2,\ \sigma
_1=\sigma\varepsilon^2$, since $\mu,\kappa$ are of $O(1)$ order and $\nu_1,\sigma_1$ are of $O(\varepsilon^2)$ order, respectively, so that $\nu,\sigma$ are of $O(1)$ order as well. Next, we consider the above anisotropic viscosity and magnetic diffusivity scaling in the horizontal and vertical directions, so that the MHD equations can converge to PEM, as the aspect ratio $\varepsilon$ tends to zero.

We perform the scaling transformation on the MHD equations so that the resulting system is defined on a fixed domain that does not depend on $\varepsilon$, as in the case considered in \cite{pe3,pe4}.
To this end, we introduce the new unknowns
$$
u'_\varepsilon=(u_\varepsilon,u_{3,\varepsilon}),\ b'_\varepsilon=(b_\varepsilon,b_{3,\varepsilon}),\
p_\varepsilon(x,y,z,t)=p(x,y,\varepsilon z,t),
$$
$$
u_\varepsilon(x,y,z,t)=u(x,y,\varepsilon z,t)=(u_1(x,y,\varepsilon z,t),u_2(x,y,\varepsilon z,t)),
$$
$$
b_\varepsilon(x,y,z,t)=b(x,y,\varepsilon z,t)=(b_1(x,y,\varepsilon z,t),b_2(x,y,\varepsilon z,t)),
$$
$$
u_{3,\varepsilon}(x,y,z,t)=\frac{1}{\varepsilon}u_3(x,y,\varepsilon z,t),\ b_{3,\varepsilon}(x,y,z,t)=\frac{1}{\varepsilon}b_3(x,y,\varepsilon z,t),
$$
for any $(x,y,z)\in\Omega:=M\times(-1,1)$, and for any $t\in(0,\infty)$. Then, $u'_\varepsilon,\ b'_\varepsilon$ and $p_\varepsilon$ satisfy the scaled MHD equations (SMHD)
\begin{align*}
&\partial_t{u_\varepsilon}+(u'_\varepsilon\cdot\nabla_3)u_\varepsilon+\nabla{p_\varepsilon}-(b'_\varepsilon\cdot\nabla_3)b_\varepsilon-\mu\Delta{u_\varepsilon}-\nu\partial_z^2{u_\varepsilon}=0, \\
&\varepsilon^2\left[\partial_t{u_{3,\varepsilon}}+(u'_\varepsilon\cdot\nabla_3)u_{3,\varepsilon}-(b'_\varepsilon\cdot\nabla_3)b_{3,\varepsilon}-\mu\Delta{u_{3,\varepsilon}}-\nu\partial_z^2{u_{3,\varepsilon}}\right]=\partial_z{p_\varepsilon}, \\
&\partial_t{b_\varepsilon}+(u'_\varepsilon\cdot\nabla_3)b_\varepsilon-(b'_\varepsilon\cdot\nabla_3)u_\varepsilon-\kappa\Delta{b_\varepsilon}-\sigma\partial_z^2{b_\varepsilon}=0, \\
&\varepsilon^2\left[\partial_t{b_{3,\varepsilon}}+(u'_\varepsilon\cdot\nabla_3)b_{3,\varepsilon}-(b'_\varepsilon\cdot\nabla_3)u_{3,\varepsilon}-\mu\Delta{b_{3,\varepsilon}}-\nu\partial_z^2{b_{3,\varepsilon}}\right]=0, \\
&\nabla\cdot{u_\varepsilon}+\partial_z{u_{3,\varepsilon}}=0,\ \nabla\cdot{b_\varepsilon}+\partial_z{b_{3,\varepsilon}}=0,
\end{align*}
defined in the fixed domain $\Omega$.
In addition, we set the SMHD system with the  initial value conditions
\begin{align*}
(u_\varepsilon,u_{3,\varepsilon})\vert_{t=0}=(u_{\varepsilon,0},u_{3,\varepsilon,0}),\;
(b_\varepsilon,b_{3,\varepsilon})\vert_{t=0}=(b_{\varepsilon,0},b_{3,\varepsilon,0}),
\end{align*}
and periodic boundary conditions
\begin{align*}
u_\varepsilon,u_{3,\varepsilon},b_\varepsilon,b_{3,\varepsilon},p_\varepsilon \;{\rm{are}} \;{\rm{periodic}} \;{\rm{in}} \; x, y, z,
\end{align*}
where $(u_{\varepsilon,0},u_{3,\varepsilon,0})$ and $(b_{\varepsilon,0},b_{3,\varepsilon,0})$ are given.
We can notice that the periods in $x$ and $y$ are $L_1$ and $L_2$, respectively, while that in $z$ is $2$.
We suppose in addition that $u_\varepsilon,u_{3,\varepsilon},p_\varepsilon,b_\varepsilon,b_{3,\varepsilon}$ are even, odd, odd, even and odd with respect to $z$, respectively.
And we always assume that the initial horizontal velocity and magnetic field $u_0,b_0$ are periodic in $x,y,z$,
and are even in $z$.

By taking the limit as $\varepsilon\rightarrow 0$ in SMHD, we obtain the incompressible three-dimensional PEM system
\begin{align}
&\partial_t{u}+(u\cdot\nabla)u+u_3\partial_z{u}+\nabla{p}-(b\cdot\nabla)b-b_3\partial_z{b}+A_u{u}=0,\label{2.1} \\
&\partial_z{p}=0, \\
&\partial_t{b}+(u\cdot\nabla)b+u_3\partial_z{b}-(b\cdot\nabla)u-b_3\partial_z{u}+A_b{b}=0,\label{2.3} \\
&\nabla\cdot{u}+\partial_z{u_3}=0,\; \nabla\cdot{b}+\partial_z{b_3}=0,\label{2.4}
\end{align}
where
$$
u=(u_1,u_2),\;b=(b_1,b_2),\;A_u{u}=-\mu\Delta{u}-\nu\partial_z^2{u},\;A_b{b}=-\kappa\Delta{b}-\sigma\partial_z^2{b}.
$$
We recall that the periodic initial-boundary value problem for the SMHD equations, and it is clear that we should impose the same boundary conditions and symmetry conditions on the corresponding limiting PEM system. However, we only need to impose the initial conditions on the horizontal components of the velocity field and the magnetic field.
Since $u_{3,0}$ and $b_{3,0}$ are odd in $z$, we have $u_{3,0}(x,y,0)=b_{3,0}(x,y,0)=0$.
Then, $u_{3,0},b_{3,0}$ can be determined uniquely by the incompressibility conditions, namely,
\begin{align}
u_{3,0}(x,y,z)=-\int_{0}^z {\nabla \cdot u_0(x,y,\xi)} \,{\rm d}\xi,\\
b_{3,0}(x,y,z)=-\int_{0}^z {\nabla \cdot b_0(x,y,\xi)} \,{\rm d}\xi.
\end{align}
Similarly, $(u_3,b_3)$ can also be determined uniquely by $u,b$ via the incompressibility conditions as
\begin{align}
u_{3}(x,y,z,t)=-\int_{0}^z {\nabla \cdot u(x,y,\xi,t)} \,{\rm d}\xi,\label{2.9}\\
b_{3}(x,y,z,t)=-\int_{0}^z {\nabla \cdot b(x,y,\xi,t)} \,{\rm d}\xi.\label{2.10}
\end{align}

The reference system \eqref{2.1}-\eqref{2.4} corresponding to the assimilation system is
\begin{align}
&\partial_t{\tilde{u}}+(\tilde{u}\cdot\nabla)\tilde{u}+
{\tilde{u}}_3\partial_z{\tilde{u}}+\nabla{\tilde{p}}
-(\tilde{b}\cdot\nabla)\tilde{b}-{\tilde{b}}_3\partial_z{\tilde{b}}
+A_{u}{\tilde{u}}=-\beta_u(I_h(\tilde{u})-I_h(u)),\label{2.11}\\
&\partial_z{\tilde{p}}=0,\\
&\partial_t{\tilde{b}}+(\tilde{u}\cdot\nabla)\tilde{b}
+{\tilde{u}}_3\partial_z{\tilde{b}}-(\tilde{b}\cdot\nabla)\tilde{u}
-{\tilde{b}}_3\partial_z{\tilde{u}}+A_{b}{\tilde{b}}=-\beta_b(I_h(\tilde{b})-I_h(b)),\label{2.13}\\
&\nabla\cdot{\tilde{u}}+\partial_z{{\tilde{u}}_3}=0,\; \nabla\cdot{\tilde{b}}+\partial_z{{\tilde{b}}_3}=0,\label{2.14}
\end{align}
where the initial-boundary value is the same as the reference system.
We suppose that the linear interpolation operator satisfies that
\begin{align}
&{\vert\vert w-I_h(w) \vert\vert}_2 \leq C_h^{1}h{\vert\vert w \vert\vert}_{H^1},\; \forall w \in H^1,\label{2.15}\\
&{\vert\vert I_h(w) \vert\vert}_2 \leq C_h^{2}{\vert\vert w \vert\vert}_{2},\; \forall w \in L^2.\label{2.16}
\end{align}
For simplicity, we denote
\begin{align*}
B: H^1 \times H^1 \rightarrow {((H^2)^2)}{'},
\end{align*}
and
\begin{align*}
(B(u,\tilde{u}),\xi)=\int_\Omega {[(u \cdot \nabla)\tilde{u}+u_3\partial_z\tilde{u}]\cdot \xi} \,{\rm d}\Omega.
\end{align*}
A useful property of $B$ is $(B(u,\tilde{u}),\tilde{u})=0$, which was verified in \cite{pe1}.

The proofs in this paper rely on the following lemmas.

\begin{lemma}\label{lem2.1}
(\cite[Lemma 2.1]{le1})
The  inequalities
\begin{align}
\int_M {\left(\int_{-1}^1 {f(x,y,z)} \,{\rm d}z\right)\left(\int_{-1}^1 {g(x,y,z)h(x,y,z)} \,{\rm d}z\right)} \,{\rm d}M\nonumber\\
\leq C{\vert\vert f \vert\vert}_2^{\frac{1}{2}}\left({\vert\vert  f\vert\vert}_2^{\frac{1}{2}}+{\vert\vert \nabla f \vert\vert}_2^{\frac{1}{2}}\right)
{\vert\vert g \vert\vert}_2
{\vert\vert h \vert\vert}_2^{\frac{1}{2}}\left({\vert\vert h \vert\vert}_2^{\frac{1}{2}}+{\vert\vert \nabla h \vert\vert}_2^{\frac{1}{2}}\right),
\end{align}
and
\begin{align}
\int_M {\left(\int_{-1}^1 {f(x,y,z)} \,{\rm d}z\right)\left(\int_{-1}^1 {g(x,y,z)h(x,y,z)} \,{\rm d}z\right)} \,{\rm d}M\nonumber\\
\leq C{\vert\vert f \vert\vert}_2
{\vert\vert g \vert\vert}_2^{\frac{1}{2}}\left({\vert\vert  g \vert\vert}_2^{\frac{1}{2}}+{\vert\vert \nabla g \vert\vert}_2^{\frac{1}{2}}\right)
{\vert\vert h \vert\vert}_2^{\frac{1}{2}}\left({\vert\vert h \vert\vert}_2^{\frac{1}{2}}+{\vert\vert \nabla h \vert\vert}_2^{\frac{1}{2}}\right),
\end{align}
hold true, for any $f,g,h$ such that the right-hand sides make sense and are finite, where $C$ is a positiive constant depending only on $L_1$ and $L_2$.
\end{lemma}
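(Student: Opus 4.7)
The plan is to reduce the three-dimensional left-hand side to a two-dimensional integral over $M$ by performing the $z$-integration first, and then to exploit the two-dimensional Ladyzhenskaya (Gagliardo--Nirenberg) inequality $\|\varphi\|_{L^4(M)}^2 \le C\|\varphi\|_{L^2(M)}\|\varphi\|_{H^1(M)}$ on the resulting 2D functions.

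For the first inequality, set
\[
F(x,y) := \int_{-1}^{1} |f(x,y,z)|\,dz, \qquad
\Phi(x,y) := \Bigl(\int_{-1}^{1} g^2\,dz\Bigr)^{1/2}, \qquad
\Psi(x,y) := \Bigl(\int_{-1}^{1} h^2\,dz\Bigr)^{1/2}.
\]
First I would apply Cauchy--Schwarz in $z$ to estimate $\int_{-1}^{1}|gh|\,dz \le \Phi(x,y)\Psi(x,y)$, reducing the left-hand side to $\int_M F\,\Phi\,\Psi\,dM$. Next, I would apply H\"older's inequality in $M$ with exponents $(4,2,4)$:
\[
\int_M F\,\Phi\,\Psi\,dM \le \|F\|_{L^4(M)}\|\Phi\|_{L^2(M)}\|\Psi\|_{L^4(M)}.
\]
The $L^2(M)$ factor is exactly $\|g\|_{L^2(\Omega)}$ by Fubini. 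For $F$ and $\Psi$ I would invoke 2D Ladyzhenskaya, so the task reduces to estimating $\|F\|_{H^1(M)}$ and $\|\Psi\|_{H^1(M)}$ in terms of the 3D norms of $f$ and $h$.

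The estimates for $F$ are straightforward via Minkowski's integral inequality: $\|F\|_{L^2(M)}\le C\|f\|_{L^2(\Omega)}$ and $\|\nabla F\|_{L^2(M)}\le C\|\nabla f\|_{L^2(\Omega)}$, differentiating under the integral sign. For $\Psi$ the key identity is $2\Psi\nabla\Psi = 2\int_{-1}^{1} h\nabla h\,dz$, which, combined with Cauchy--Schwarz, yields the pointwise bound $|\nabla\Psi(x,y)|^2 \le \int_{-1}^{1}|\nabla h|^2\,dz$, hence $\|\nabla\Psi\|_{L^2(M)}\le \|\nabla h\|_{L^2(\Omega)}$. Putting everything together and using $(a+b)^{1/2}\le a^{1/2}+b^{1/2}$ to separate the $H^1$ norm into an $L^2$ plus $\nabla$ piece produces exactly the right-hand side of the first estimate.

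The second inequality is obtained by the same scheme with the H\"older exponents redistributed as $(2,4,4)$, placing the $L^2$ weight on $F$ and splitting the $L^4$ norms between $\Phi$ and $\Psi$; the same Minkowski/chain-rule bounds apply to $\Phi$ instead of $\Psi$. The main technical obstacle is the gradient bound for $\Psi$ (and $\Phi$), since differentiating a square root could lose regularity where the integrand vanishes; the trick above circumvents this by differentiating $\Psi^2$ first and only then dividing, giving a clean $L^2$--bound without pointwise positivity assumptions. The rest of the argument is bookkeeping with H\"older and Minkowski, and the constant $C$ depends only on $L_1,L_2$ through the 2D Ladyzhenskaya constant on $M$.
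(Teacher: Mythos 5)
Your proposal is correct, and it is essentially the standard argument: the paper itself offers no proof of Lemma \ref{lem2.1} (it is quoted from Cao--Titi \cite{le1}), and the proof there proceeds exactly as you describe, via Cauchy--Schwarz in $z$, H\"older on $M$ with exponents $(4,2,4)$ (resp.\ $(2,4,4)$), the two-dimensional Ladyzhenskaya inequality, and Minkowski-type bounds for the $z$-integrated quantities, with the differentiate-$\Psi^2$-then-divide trick handling the gradient of the square root. Nothing is missing.
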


\begin{lemma}\label{lem2.2}
(\cite[Lemma 1.1]{le2})
Let $g$, $h$ and $f$ be three non-negative locally integrable functions on $(t_0,\infty)$ such that
\begin{align*}
\frac{df}{dt} \leq gf+h, \;\forall t \geq t_0,
\end{align*}
and
\begin{align*}
\int_{t}^{t+1} {g} \,{\rm d}s \leq a_1,\; \int_t^{t+1} {h} \,{\rm d}s \leq a_2,\; \int_t^{t+1} {f} \,{\rm d}s \leq a_3,\;
\forall t \geq t_0,
\end{align*}
where $a_1,a_2,a_3$ are positive constants.
Then
\begin{align*}
f(t+1)\leq(a_2+a_3)e^{a_1},\; \forall t\geq t_0.
\end{align*}
\end{lemma}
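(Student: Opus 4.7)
The plan is to reduce the statement to a pointwise application of the classical differential form of Gronwall's inequality, and then absorb the unknown initial value $f(s)$ by averaging $s$ over $[t,t+1]$ so as to invoke the third integral bound. Throughout, all functions are non-negative, which lets me freely estimate $\int_s^{t+1} g$ by $a_1$ without worrying about signs.

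First, I would fix an arbitrary $t \ge t_0$ and, for each $s \in [t,t+1]$, apply Gronwall on the subinterval $[s,t+1]$ to the differential inequality $f' \le g f + h$. This yields
\begin{equation*}
f(t+1) \le f(s)\exp\!\left(\int_s^{t+1} g(\tau)\,{\rm d}\tau\right) + \int_s^{t+1} h(\tau)\exp\!\left(\int_\tau^{t+1} g(\sigma)\,{\rm d}\sigma\right){\rm d}\tau.
\end{equation*}
Since $g \ge 0$, both inner exponential integrals are bounded above by $\int_t^{t+1} g \le a_1$, so I get the pointwise estimate $f(t+1) \le e^{a_1} f(s) + e^{a_1} \int_t^{t+1} h(\tau)\,{\rm d}\tau \le e^{a_1} f(s) + e^{a_1} a_2$.

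Next, since this inequality holds for every $s \in [t,t+1]$ while the left-hand side is independent of $s$, I would integrate in $s$ over $[t,t+1]$ (an interval of length one). The term $e^{a_1} f(s)$ integrates to at most $e^{a_1} a_3$ by the third hypothesis, and the constant term is unchanged, giving $f(t+1) \le (a_2 + a_3)\, e^{a_1}$, which is precisely the claim.

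The only delicate point is justifying the differential Gronwall step when $f$ is merely locally integrable with $f' \le gf + h$ in a weak/a.e. sense, but this is standard: one multiplies by the integrating factor $\exp(-\int_{s}^{\cdot} g)$ and integrates, which requires nothing beyond local integrability of $g$, $h$, $f$. There is no real obstacle beyond bookkeeping; the whole content is the trick of averaging the free endpoint $s$ to convert the unknown $f(s)$ into a quantity controlled by $a_3$.
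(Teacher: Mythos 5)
Your argument is correct and is exactly the standard proof of this result (the uniform Gronwall lemma), which the paper itself does not prove but simply cites from Temam's book; the proof given there is the same one you wrote: Gronwall on $[s,t+1]$, bound the exponentials by $e^{a_1}$ using $g\ge 0$, then average the free endpoint $s$ over $[t,t+1]$ to replace $f(s)$ by $a_3$. No gaps; your remark about interpreting the differential inequality via the integrating factor for merely locally integrable data is the right way to handle the regularity caveat.
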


\begin{lemma}\label{lem2.3}
(\cite[Lemma 2.2]{pe4})
Let $\varphi=(\varphi_1,\varphi_2,\varphi_3)$, $\phi$ and $\psi$ be periodic functions with basic domain $\Omega$. Suppose that $\varphi\in H^1(\Omega)$, with $\nabla_3\cdot{\varphi}=0$ in $\Omega$, $\int_{\Omega} {\varphi} \,{\rm d}\Omega=0$, and $\varphi_3\vert_{z=0}$, $\nabla_3\phi\in H^1(\Omega)$ and $\psi\in L^2(\Omega)$. Denote by $\varphi_H=(\varphi_1,\varphi_2)$ the horizontal components of the function $\varphi$. Then, we have the estimate
\begin{align*}
\left\vert\int_{\Omega} {(\varphi\cdot\nabla_3\phi)\psi} \,{\rm d}\Omega\right\vert\leq C{\vert\vert \nabla_3\varphi_H \vert\vert}_{2}^{\frac{1}{2}}{\vert\vert \Delta_3\varphi_H \vert\vert}_{2}^{\frac{1}{2}}
{\vert\vert \nabla_3\phi \vert\vert}_{2}^{\frac{1}{2}}{\vert\vert \Delta_3\phi \vert\vert}_{2}^{\frac{1}{2}}
{\vert\vert \psi \vert\vert}_{2}.
\end{align*}
where $C$ is a positive constant depending only on $L_1$ and $L_2$.
\end{lemma}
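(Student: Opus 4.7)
My plan is to decompose the integrand into a horizontal piece and a vertical piece and treat each with the anisotropic inequality of Lemma \ref{lem2.1}. Writing $\varphi\cdot\nabla_3\phi = \varphi_H\cdot\nabla\phi + \varphi_3\,\partial_z\phi$, I split
$$\int_\Omega (\varphi\cdot\nabla_3\phi)\psi\,d\Omega = I_1 + I_2,$$
where $I_1 := \int_\Omega (\varphi_H\cdot\nabla\phi)\psi\,d\Omega$ and $I_2 := \int_\Omega \varphi_3(\partial_z\phi)\psi\,d\Omega$. The whole point of the estimate is that the right-hand side of the statement involves no norm of $\varphi_3$, so the divergence-free constraint must be used to trade $\varphi_3$ for horizontal derivatives of $\varphi_H$.

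For $I_2$, the condition $\nabla_3\cdot\varphi = 0$ together with $\varphi_3|_{z=0}=0$ (read from the hypothesis) yields the reconstruction $\varphi_3(x,y,z) = -\int_0^z \nabla\cdot\varphi_H(x,y,\xi)\,d\xi$, so $|\varphi_3(x,y,z)|\leq \int_{-1}^1 |\nabla\varphi_H(x,y,\xi)|\,d\xi$. Substituting this pointwise bound places $|I_2|$ in exactly the form to which Lemma \ref{lem2.1} applies. I would invoke the first inequality there with the pairing $f=|\nabla\varphi_H|$, $g=|\psi|$, $h=|\partial_z\phi|$, since this is the unique choice that keeps $\psi$ in $L^2$ alone. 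Afterwards I would absorb the mixed ``$\|\cdot\|_2^{1/2}(\|\cdot\|_2^{1/2}+\|\nabla\cdot\|_2^{1/2})$'' factors into clean $\|\nabla_3\cdot\|_2^{1/2}\|\Delta_3\cdot\|_2^{1/2}$ ones, using Poincar\'e (the hypothesis $\int_\Omega\varphi\,d\Omega=0$ supplies mean zero for $\varphi_H$; periodicity supplies it for $\nabla_3\phi$) together with the standard periodic identity $\|\nabla_3^2\cdot\|_2 \sim \|\Delta_3\cdot\|_2$.

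For $I_1$, I would pass to the barotropic--baroclinic decomposition $\varphi_H = \bar\varphi_H+\varphi_H^\sharp$, with $\bar\varphi_H(x,y)=\tfrac12\int_{-1}^1\varphi_H(x,y,z)\,dz$. The baroclinic piece satisfies $|\varphi_H^\sharp(x,y,z)|\leq \int_{-1}^1|\partial_z\varphi_H|\,d\xi$, so its contribution has precisely the same structure as $I_2$ (with $|\partial_z\varphi_H|$ replacing $|\nabla\varphi_H|$) and Lemma \ref{lem2.1} with the same $(f,g,h)$ pairing delivers an analogous bound. The barotropic piece $\bar\varphi_H$ is $z$-independent, so its contribution equals $\int_M |\bar\varphi_H|\bigl(\int_{-1}^1 |\nabla\phi\cdot\psi|\,dz\bigr)dM$, which is again in the hypothesis of Lemma \ref{lem2.1} with $f=|\bar\varphi_H|$ viewed as $z$-constant; the trivial inequalities $\|\bar\varphi_H\|_{H^k(M)}\leq C\|\varphi_H\|_{H^k(\Omega)}$ and Poincar\'e on $M$ (since $\int_M\bar\varphi_H=\tfrac12\int_\Omega\varphi_H=0$) close the estimate.

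The main obstacle is a consistent bookkeeping choice. The assignment of $(f,g,h)$ in Lemma \ref{lem2.1} must be made uniformly across the four sub-pieces so that $\psi$ never picks up a derivative, and the Poincar\'e inequalities (from the mean-zero and periodicity hypotheses) must be invoked in a coordinated fashion to convert every mixed $\|\cdot\|_2^{1/2}(\|\cdot\|_2^{1/2}+\|\nabla\cdot\|_2^{1/2})$ into the clean $\|\nabla_3\cdot\|_2^{1/2}\|\Delta_3\cdot\|_2^{1/2}$ factor demanded by the statement. Once those roles are pinned down, the remainder is routine algebra.
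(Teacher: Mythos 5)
The paper does not prove this lemma at all: it is quoted verbatim from \cite[Lemma 2.2]{pe4} (Li--Titi), so there is no in-paper argument to compare against. Your reconstruction is essentially the standard Cao--Titi/Li--Titi proof of such anisotropic trilinear estimates, and I find no gap in it. The decomposition into $I_1+I_2$, the elimination of $\varphi_3$ via $\varphi_3=-\int_0^z\nabla\cdot\varphi_H\,d\xi$ (using $\nabla_3\cdot\varphi=0$ and $\varphi_3|_{z=0}=0$, the latter being the evident intended reading of the garbled hypothesis), the barotropic--baroclinic split of $\varphi_H$ for $I_1$, and the uniform assignment $g=|\psi|$ in Lemma \ref{lem2.1} so that $\psi$ only ever contributes $\|\psi\|_2$ are exactly the right moves. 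The bookkeeping you defer is genuinely routine: every $f$- or $h$-slot is filled by a first derivative of $\varphi_H$ or $\phi$ (hence automatically mean-zero by periodicity, so Poincar\'e upgrades $\|\cdot\|_2^{1/2}+\|\nabla\cdot\|_2^{1/2}$ to $C\|\Delta_3\cdot\|_2^{1/2}$ via the periodic identity $\|\nabla_3^2 w\|_2=\|\Delta_3 w\|_2$), except for the single barotropic slot $f=|\bar\varphi_H|$, where you correctly invoke the hypothesis $\int_\Omega\varphi\,d\Omega=0$ to get Poincar\'e on $M$; there the $\varphi$-factor comes out as $C\|\nabla_3\varphi_H\|_2$, which is dominated by $C\|\nabla_3\varphi_H\|_2^{1/2}\|\Delta_3\varphi_H\|_2^{1/2}$ by one more application of Poincar\'e to $\nabla_3\varphi_H$. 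So the argument closes as claimed.
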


\begin{lemma}\label{lem2.4}
(\cite{le4})
Let $V, H, V'$ be three Hilbert spaces such that $V\subset H=H' \subset V'$, where $H'$ and $V'$ are the dual spaces of $H$ and $V$ respectively. Suppose $u\in L^2(0,T;V)$ and $\partial_t u\in L^2(0,T;V')$. Then $u$ is almost everywhere equal to a function continuous from $[0,T]$ into $H$.
\end{lemma}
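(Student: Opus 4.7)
The plan is to establish the classical Lions--Magenes energy identity
\begin{equation*}
\|u(t)\|_H^2 - \|u(s)\|_H^2 = 2\int_s^t \langle \partial_\tau u(\tau), u(\tau)\rangle_{V',V}\,d\tau
\end{equation*}
for almost every $s,t\in[0,T]$, and then deduce strong continuity into $H$ from it. The first step is to extend $u$ to a function $\tilde{u}$ on all of $\mathbb{R}$ with $\tilde{u}\in L^2(\mathbb{R};V)$ and $\partial_t\tilde{u}\in L^2(\mathbb{R};V')$. A clean way is to reflect $u$ across the endpoints $t=0$ and $t=T$ and then multiply by a smooth compactly supported cutoff; this preserves both integrability conditions since reflection merely swaps the sign of the weak time derivative on each reflected interval.

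Next, I would convolve in time with a standard mollifier $\rho_\varepsilon$ to obtain $\tilde{u}_\varepsilon=\rho_\varepsilon*\tilde{u}$, which is $C^\infty$ in $t$ with values in $V$ and satisfies $\partial_t\tilde{u}_\varepsilon=\rho_\varepsilon*\partial_t\tilde{u}$. For such a smooth function, the chain rule gives pointwise
\begin{equation*}
\frac{d}{dt}\|\tilde{u}_\varepsilon(t)\|_H^2 = 2\langle\partial_t\tilde{u}_\varepsilon(t),\tilde{u}_\varepsilon(t)\rangle_{V',V}.
\end{equation*}
Integrating from $s$ to $t$ and sending $\varepsilon\to 0^+$, using $\tilde{u}_\varepsilon\to\tilde{u}$ in $L^2_{\mathrm{loc}}(\mathbb{R};V)$ and $\partial_t\tilde{u}_\varepsilon\to\partial_t\tilde{u}$ in $L^2_{\mathrm{loc}}(\mathbb{R};V')$ together with a subsequential almost everywhere pointwise convergence of $\tilde{u}_\varepsilon(t)$ in $H$, yields the displayed energy identity on $[0,T]$.

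Since the right-hand side is absolutely continuous in $t$, the scalar function $t\mapsto\|u(t)\|_H^2$ agrees almost everywhere with a continuous function on $[0,T]$. Independently, the one-dimensional Sobolev embedding $W^{1,2}(0,T;V')\hookrightarrow C([0,T];V')$ produces a representative of $u$ lying in $C([0,T];V')$. Combined with the uniform $H$-bound inherited from the identity and the density of $V$ in $H$, this promotes to weak continuity $u\in C_w([0,T];H)$. Weak continuity together with continuity of the $H$-norm then upgrades to norm convergence, hence strong continuity into $H$, after modification on a Lebesgue null set.

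The main obstacle I anticipate is the justification of the limit passage in the convolved energy identity: the duality pairing $\langle\partial_t\tilde{u}_\varepsilon,\tilde{u}_\varepsilon\rangle_{V',V}$ mixes $V$- and $V'$-regularity, and one must control the behaviour near the endpoints $t=0$ and $t=T$ carefully, which is precisely why the reflection extension (rather than naive zero extension, which would introduce spurious Dirac masses in $\partial_t\tilde u$) is used to prepare the ground for mollification.
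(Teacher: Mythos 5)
The paper offers no proof of this lemma at all: it is quoted verbatim from the cited reference \cite{le4} (Lions--Magenes; the same statement is Lemma 1.2 of Chapter III in Temam's book \cite{le2}). So the only meaningful comparison is with the classical proof, and your outline is essentially that classical proof: even reflection plus cutoff to extend $u$ to $\mathbb{R}$, mollification in time, the chain rule for the smooth approximants, passage to the limit in the energy identity using the bilinear continuity of the pairing $L^2(V')\times L^2(V)\to L^1$, and then the combination ``weak continuity into $H$ plus continuity of the $H$-norm implies strong continuity.'' All of the preparatory steps are correctly identified, including the reason reflection is preferred to extension by zero.

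There is, however, one genuine soft spot at the very end. The energy identity only gives you that $t\mapsto\Vert u(t)\Vert_H^2$ agrees \emph{almost everywhere} with a continuous function $g$, while the weak continuity argument pins down a (unique) weakly continuous representative. Weak lower semicontinuity then yields $\Vert u(t)\Vert_H^2\le g(t)$ for \emph{every} $t$, but nothing in the package ``$u\in C_w([0,T];H)$ and $\Vert u(\cdot)\Vert_H^2=g(\cdot)$ a.e.'' rules out a norm drop $\Vert u(t_0)\Vert_H^2<g(t_0)$ at an isolated $t_0$ (a weakly continuous curve can lose norm at a point), and your final step needs equality at every $t$, not just a.e. The standard repair, which also renders the weak-continuity detour unnecessary, is to show directly that the mollified family is Cauchy in $C([0,T];H)$: for a fixed Lebesgue point $s$,
\begin{equation*}
\sup_{t\in[0,T]}\Vert \tilde{u}_\varepsilon(t)-\tilde{u}_{\varepsilon'}(t)\Vert_H^2
\le \Vert \tilde{u}_\varepsilon(s)-\tilde{u}_{\varepsilon'}(s)\Vert_H^2
+2\,\Vert \partial_t\tilde{u}_\varepsilon-\partial_t\tilde{u}_{\varepsilon'}\Vert_{L^2(0,T;V')}\,
\Vert \tilde{u}_\varepsilon-\tilde{u}_{\varepsilon'}\Vert_{L^2(0,T;V)}\longrightarrow 0,
\end{equation*}
so the mollifications converge uniformly to a genuinely continuous $H$-valued function equal to $u$ a.e. With that substitution (or with an explicit argument that $\Vert u(t)\Vert_H^2=g(t)$ for all $t$), your proof is complete and coincides with the argument in the cited sources.
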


\section{Well-posedness and exponential convergence}

In this section, we investigate the well-posedness of the PEM and its assimilation system, and consider the convergence relationship between the assimilation solution and the reference solution. To obtain these results, we first derive some uniform estimates. For simplicity, let's set $\mu=\kappa$ and $\nu=\sigma$ which can guarantee that the left-hand side of the energy inequality absorbs the nonlinear terms on the right-hand side.

To get these results, we first derive some consistent estimates. For simplicity, let's set $\mu=\kappa$and $\nu=\sigma$. This ensures that the energy inequality on the left absorbs the nonlinear term on the right.

\subsection{$L^2$-uniform estimates for $u, b, \tilde{u}, \tilde{b}$ }

Taking the inner product of equation \eqref{2.1} and \eqref{2.3} with $u$ and $b$ respectively, we have
\begin{align}
\frac{1}{2}\frac{d}{dt}{\vert\vert u \vert\vert}_{2}^2+\frac{1}{2}\frac{d}{dt}{\vert\vert b \vert\vert}_{2}^2
+\mu{\vert\vert \nabla u \vert\vert}_{2}^2+\nu{\vert\vert \partial_z{u} \vert\vert}_{2}^2
+\kappa{\vert\vert \nabla b \vert\vert}_{2}^2+\sigma{\vert\vert \partial_z{b} \vert\vert}_{2}^2=0.\label{3.1}
\end{align}
Thanks to $\mu=\kappa,\; \nu=\sigma$ and using the Poincar\'{e} inequality, we can infer that
\begin{align*}
\frac{d}{dt}\left({\vert\vert u \vert\vert}_{2}^2+{\vert\vert b \vert\vert}_{2}^2\right)
+2\mu C_1{\vert\vert u \vert\vert}_{2}^2+2\nu C_1{\vert\vert b \vert\vert}_{2}^2\leq0,
\end{align*}
so we can get
\begin{align*}
\frac{d}{dt}\left({\vert\vert u \vert\vert}_{2}^2+{\vert\vert b \vert\vert}_{2}^2\right)
+C\left({\vert\vert u \vert\vert}_{2}^2+{\vert\vert b \vert\vert}_{2}^2\right)\leq0,
\end{align*}
where $C=\min\{2\mu C_1, 2\nu C_1\}$.
Using the Gronwall inequality, we get
\begin{align*}
{\vert\vert u \vert\vert}_{2}^2+{\vert\vert b \vert\vert}_{2}^2 \leq
e^{-Ct}\left({\vert\vert u_0 \vert\vert}_{2}^2+{\vert\vert b_0 \vert\vert}_{2}^2\right),
\end{align*}
so $u$ and $b$ are $L^2$ uniformly bounded.
Integrating in time $0$ to $t$ for \eqref{3.1}, we can get that there exists $k_0>0$ such that
\begin{align}
{\vert\vert u \vert\vert}_{2}^2+{\vert\vert b \vert\vert}_{2}^2
+\int_0^{t} {\left({\vert\vert u \vert\vert}_{H^1}^2+{\vert\vert b \vert\vert}_{H^1}^2\right)} \,{\rm d}s \leq k_0,\label{3.2}
\end{align}
for $\forall t\in[0,\infty)$. For convenience, $k_0$ is used for boundedness throughout the rest of this paper.
Integrating in time $t$ to $t+1$ for \eqref{3.1}, we can get the uniform boundedness of the time average of
${\vert\vert u \vert\vert}_{H^1}^2$ and ${\vert\vert b \vert\vert}_{H^1}^2$,
i.e.,
\begin{align*}
\int_t^{t+1} {\left({\vert\vert u \vert\vert}_{H^1}^2+{\vert\vert b \vert\vert}_{H^1}^2\right)} \,{\rm d}s \leq k_0.
\end{align*}
Thus, we get
\begin{align}
{\vert\vert u \vert\vert}_{2}^2+{\vert\vert b \vert\vert}_{2}^2
+\int_t^{t+1} {\left({\vert\vert u \vert\vert}_{H^1}^2+{\vert\vert b \vert\vert}_{H^1}^2\right)} \,{\rm d}s \leq k_0.\label{3.3}
\end{align}

Similarly, taking the inner product of assimilation equation \eqref{2.11} and \eqref{2.13} with $\tilde{u}$ and $\tilde{b}$ respectively, we have
\begin{align*}
&\frac{1}{2}\frac{d}{dt}{\vert\vert \tilde{u} \vert\vert}_{2}^2+\frac{1}{2}\frac{d}{dt}{\vert\vert \tilde{b} \vert\vert}_{2}^2
+\mu{\vert\vert \nabla \tilde{u} \vert\vert}_{2}^2+\nu{\vert\vert \partial_z{\tilde{u}} \vert\vert}_{2}^2
+\kappa{\vert\vert  \nabla \tilde{b} \vert\vert}_{2}^2+\sigma{\vert\vert \partial_z{\tilde{b}} \vert\vert}_{2}^2\\
=&-\beta_u(I_h(\tilde{u})-I_h(u),\tilde{u})-\beta_b(I_h(\tilde{b})-I_h(b),\tilde{b}).
\end{align*}
Because of \eqref{2.15} and \eqref{2.16}, using the H\"older and Young inequalities, we can get
\begin{align*}
&-\beta_u(I_h(\tilde{u})-I_h(u),\tilde{u})-\beta_b(I_h(\tilde{b})-I_h(b),\tilde{b})\\
\leq& \beta_u{\vert\vert I_h(u) \vert\vert}_2{\vert\vert \tilde{u} \vert\vert}_2
-\beta_u(I_h(\tilde{u})-\tilde{u},\tilde{u})-\beta_u{\vert\vert \tilde{u} \vert\vert}^2_2\\
&+\beta_b{\vert\vert I_h(b) \vert\vert}_2{\vert\vert \tilde{b} \vert\vert}_2
-\beta_b(I_h(\tilde{b})-\tilde{b},\tilde{b})-\beta_b{\vert\vert \tilde{b} \vert\vert}^2_2\\
\leq& C\beta_u{\vert\vert u \vert\vert}_2{\vert\vert \tilde{u} \vert\vert}_2
+C_1\beta_u{\vert\vert \tilde{u} \vert\vert}_{H^1}{\vert\vert \tilde{u} \vert\vert}_2-\beta_u{\vert\vert \tilde{u} \vert\vert}^2_2\\
&+C\beta_b{\vert\vert b \vert\vert}_2{\vert\vert \tilde{b} \vert\vert}_2
+C_1\beta_b{\vert\vert \tilde{b} \vert\vert}_{H^1}{\vert\vert \tilde{b} \vert\vert}_2-\beta_b{\vert\vert \tilde{b} \vert\vert}^2_2\\
%{2}C^2\beta_u{\vert\vert u \vert\vert}^2_{H^1}+\frac{1}{2}\beta_u{\vert\vert \tilde{u} \vert\vert}^2_2
%+\frac{1}{2}C_1^2\beta_u{\vert\vert \tilde{u} \vert\vert}^2_{H^1}+\frac{1}{2}\beta_u{\vert\vert \tilde{u} \vert\vert}^2_2-\beta_u{\vert\vert \tilde{u} \vert\vert}^2_2\\
%&+\frac{1}{2}C^2\beta_b{\vert\vert b \vert\vert}^2_{H^1}+\frac{1}{2}\beta_b{\vert\vert \tilde{b} \vert\vert}^2_2
%+\frac{1}{2}C_1^2\beta_b{\vert\vert \tilde{b} \vert\vert}^2_{H^1}+\frac{1}{2}\beta_b{\vert\vert \tilde{b} \vert\vert}^2_2-\beta_b{\vert\vert \tilde{b} \vert\vert}^2_2,\\
\leq& \frac{1}{2}C^2\beta_u{\vert\vert u \vert\vert}^2_{2}
+\frac{1}{2}C_1^2\beta_u{\vert\vert \tilde{u} \vert\vert}^2_{H^1}
+\frac{1}{2}C^2\beta_b{\vert\vert b \vert\vert}^2_{2}
+\frac{1}{2}C_1^2\beta_b{\vert\vert \tilde{b} \vert\vert}^2_{H^1},
\end{align*}
where $C=C_h^2,\; C_1=C_h^1h$.
Now choosing $h$ small enough so that
$$
C_1^2\beta_u \leq \min\{\mu,\nu\},\;
C_1^2\beta_b \leq \min\{\kappa,\sigma\},
$$
we get
\begin{align*}
&\frac{d}{dt}{\vert\vert \tilde{u} \vert\vert}_{2}^2+\frac{d}{dt}{\vert\vert \tilde{b} \vert\vert}_{2}^2
+\mu{\vert\vert \tilde{u} \vert\vert}_{2}^2+\nu{\vert\vert \partial_z{\tilde{u}} \vert\vert}_{2}^2
+\kappa{\vert\vert \tilde{b} \vert\vert}_{2}^2+\sigma{\vert\vert \partial_z{\tilde{b}} \vert\vert}_{2}^2\\
\leq& C^2\beta_u{\vert\vert u \vert\vert}^2_{2}+C^2\beta_b{\vert\vert b \vert\vert}^2_{2}.
\end{align*}
So by the Gronwall inequality and the estimate \eqref{3.2}, we get that $\tilde{u}$ and $\tilde{b}$ are $L^2$ uniformly bounded.
Similarly, by the estimate \eqref{3.3}, we can obtain
\begin{align}
{\vert\vert \tilde{u} \vert\vert}_{2}^2+{\vert\vert \tilde{b} \vert\vert}_{2}^2
+\int_t^{t+1} {\left({\vert\vert \tilde{u} \vert\vert}_{H^1}^2+{\vert\vert \tilde{b} \vert\vert}_{H^1}^2\right)} \,{\rm d}s \leq k_0.\label{3.4}
\end{align}

\subsection{$L^4$-uniform estimates for $u, b, \tilde{u}, \tilde{b}$}

Adding and subtracting the equations \eqref{2.1} and \eqref{2.3} respectively, we obtain
\begin{align*}
\partial_t&({u+b})+(u\cdot\nabla)(u+b)+u_3({\partial_z{u}+\partial_z{b}})+\nabla{p}-(b\cdot\nabla)(u+b)-b_3({\partial_z{u}+\partial_z{b}})\\
&-\mu\Delta{u}-\kappa\Delta{b}-\nu\partial_z^2{u}-\sigma\partial_z^2{b}=0,\\
\partial_t&({u-b})+(u\cdot\nabla)(u-b)+u_3({\partial_z{u}-\partial_z{b}})+\nabla{p}-(b\cdot\nabla)(u-b)-b_3\partial_z({u-b})\\
&-\mu\Delta{u}+\kappa\Delta{b}-\nu\partial_z^2{u}+\sigma\partial_z^2{b}=0.
\end{align*}
Letting $A=u+b,\;A^*=u-b$, we get
\begin{align}
\partial_t&A+(u\cdot\nabla)A+u_3\partial_z{A}+\nabla{p}-(b\cdot\nabla)A-b_3{\partial_z{A}}-\mu\Delta{u}-\kappa\Delta{b}-\nu\partial_z^2{u}-\sigma\partial_z^2{b}=0,\label{3.6}\\
\partial_t&A^*+(u\cdot\nabla)A^*+u_3{\partial_z{A^*}}+\nabla{p}-(b\cdot\nabla)A^*+b_3\partial_zA^*-\mu\Delta{u}+\kappa\Delta{b}-\nu\partial_z^2{u}+\sigma\partial_z^2{b}=0.\label{3.7}
\end{align}
%To derive the uniform estimates of $A$ and $A^*$, we assume $\mu=\kappa,\;\nu=\sigma$. This guarantees that the left-hand side of the energy inequality absorbs the nonlinear terms on the right-hand side.
For simplicity, we assume $\mu=\kappa,\;\nu=\sigma$.
Taking the inner product of equations \eqref{3.6} and \eqref{3.7} with ${\vert A \vert}^2A$ and ${\vert A^* \vert}^2A^*$ respectively, we obtain
\begin{align*}
&\frac{1}{4}\frac{d}{dt}{\vert\vert A \vert\vert}_4^4+\frac{1}{4}\frac{d}{dt}{\vert\vert A^* \vert\vert}_4^4\\
&+\int_M {\mu\nabla A(2\nabla{\vert A \vert}{\vert A \vert}A+\nabla A{\vert A \vert}^2)} \,{\rm d}M
+\int_z {\nu\partial_z A(2\partial_z{\vert A \vert}{\vert A \vert}A+\partial_z A{\vert A \vert}^2)} \,{\rm d}z\\
&+\int_M {\mu\nabla A^*(2\nabla{\vert A^* \vert}{\vert A^* \vert}A^*+\nabla A^*{\vert A^* \vert}^2)} \,{\rm d}M
+\int_z {\nu\partial_z A^*(2\partial_z{\vert A^* \vert}{\vert A^* \vert}A^*+\partial_z A^*{\vert A^* \vert}^2)} \,{\rm d}z\\
=&-\int_\Omega {\nabla p({\vert A \vert}^2A)} \,{\rm d}\Omega-\int_\Omega {\nabla p({\vert A^* \vert}^2A^*)} \,{\rm d}\Omega.
\end{align*}
Using Lemma \ref{lem2.1}, we infer that
\begin{align}
-\int_\Omega {\nabla p({\vert A \vert}^2A)} \,{\rm d}\Omega
\leq& \int_M {\left(\int_{-1}^1 {{\vert A \vert}^3} \,{\rm d}z\right)}\nabla p \,{\rm d}M\nonumber\\
\leq& C{\vert\vert \nabla p \vert\vert}_2{\vert\vert A \vert\vert}_4\left({\vert\vert A \vert\vert}_4
+{\vert\vert {\nabla {\vert A \vert}^2} \vert\vert}_2^{\frac{1}{2}}\right){\vert\vert  A \vert\vert}_2^{\frac{1}{2}}
\left({\vert\vert  A \vert\vert}_2^{\frac{1}{2}}+{\vert\vert  \nabla A \vert\vert}_2^{\frac{1}{2}}\right).
\end{align}
From \cite[Propsition 3.3]{pe3}, we know that
$$
{\vert\vert \nabla p \vert\vert}_2 \leq c{\vert\vert \vert A \vert \nabla A \vert\vert}_2.
$$
Because of the Poincar\'{e} and Young inequalities, we get
$$
\begin{aligned}
-\int_\Omega {\nabla p({\vert A \vert}^2A)} \,{\rm d}\Omega
\leq& C{\vert\vert \vert A \vert\nabla A \vert\vert}_2{\vert\vert A \vert\vert}_4\left({\vert\vert A \vert\vert}_4
+{\vert\vert {\nabla {\vert A \vert}^2} \vert\vert}_2^{\frac{1}{2}}\right){\vert\vert  A \vert\vert}_2^{\frac{1}{2}}
\left({\vert\vert  A \vert\vert}_2^{\frac{1}{2}}+{\vert\vert  \nabla A \vert\vert}_2^{\frac{1}{2}}\right)\\
\leq& C\left({\vert\vert \vert A \vert\nabla A \vert\vert}_2{\vert\vert A \vert\vert}_4^2
+{\vert\vert \vert A \vert\nabla A \vert\vert}_2^{\frac{3}{2}}{\vert\vert A \vert\vert}_4\right){\vert\vert  A \vert\vert}_2^{\frac{1}{2}}
{\vert\vert  \nabla A \vert\vert}_2^{\frac{1}{2}}\\
\leq& {\frac{1}{2}\mu}{\vert\vert \vert A \vert\nabla A \vert\vert}_2^{2}
+C_\mu\left({\vert\vert A \vert\vert}_2{\vert\vert \nabla A \vert\vert}_2+{\vert\vert A \vert\vert}_2^2{\vert\vert \nabla A \vert\vert}_2^2\right){\vert\vert A \vert\vert}_4^4.
\end{aligned}
$$
Similarly, we can get
\begin{align*}
-\int_\Omega {\nabla p({\vert A^* \vert}^2A^*)} \,{\rm d}\Omega
\leq& {\frac{1}{2}\mu}{\vert\vert \vert A^* \vert\nabla A^* \vert\vert}_2^{2}
+C_\mu\left({\vert\vert A^* \vert\vert}_2{\vert\vert \nabla A^* \vert\vert}_2+{\vert\vert A^* \vert\vert}_2^2{\vert\vert \nabla A^* \vert\vert}_2^2\right){\vert\vert A^* \vert\vert}_4^4.
\end{align*}
Combining all the above estimates, we obtain
\begin{align}
&\frac{d}{dt}\left({\vert\vert A \vert\vert}_4^4+{\vert\vert A^* \vert\vert}_4^4\right)
+\mu{\vert\vert \vert A \vert\nabla A \vert\vert}_2^2
+\mu{\vert\vert \vert A^* \vert\nabla A^* \vert\vert}_2^2+\nu{\vert\vert \vert A \vert\partial_z A \vert\vert}_2^2
+\nu{\vert\vert \vert A^* \vert\partial_z A^* \vert\vert}_2^2\nonumber\\
\leq& C\left({\vert\vert A \vert\vert}_2{\vert\vert \nabla A \vert\vert}_2+{\vert\vert A \vert\vert}_2^2{\vert\vert \nabla A \vert\vert}_2^2+{\vert\vert A^* \vert\vert}_2{\vert\vert \nabla A^* \vert\vert}_2+{\vert\vert A^* \vert\vert}_2^2{\vert\vert \nabla A^* \vert\vert}_2^2\right)\left({\vert\vert A \vert\vert}^4_4+{\vert\vert A^* \vert\vert}^4_4\right).
\end{align}
Considering the Gronwall inequality and estimate \eqref{3.2}, we can get
%$$
%{\vert\vert A \vert\vert}_4^4+{\vert\vert A^* \vert\vert}_4^4
%\leq C\left({\vert\vert A_0 %\vert\vert}_4^4+{\vert\vert %A^*_0 \vert\vert}_4^4\right).
%$$
$A$ and $A^*$ are $L^4$ uniformly bounded.
Integrating in time $t$ to $t+1$ and by \eqref{3.3}, we can get
\begin{align}
{\vert\vert A \vert\vert}_4^4+{\vert\vert A^* \vert\vert}_4^4
+\int_t^{t+1} {\left({\vert\vert \vert A \vert\nabla A \vert\vert}_2^2
+{\vert\vert \vert A^* \vert\nabla A^* \vert\vert}_2^2+{\vert\vert \vert A \vert\partial_z A \vert\vert}_2^2
+{\vert\vert \vert A^* \vert\partial_z A^* \vert\vert}_2^2\right)} \,{\rm d}s \leq k_0.\label{3.9}
\end{align}

Similarly, adding and subtracting the equations \eqref{2.11} and \eqref{2.13} respectively, we obtain
\begin{align}
\partial_t&\tilde{A}+(\tilde{u}\cdot\nabla)\tilde{A}+\tilde{u}_3\partial_z{\tilde{A}}+\nabla{\tilde{p}}
-(\tilde{b}\cdot\nabla)\tilde{A}-\tilde{b}_3{\partial_z{\tilde{A}}}\nonumber\\
&-\mu\Delta{\tilde{A}}-\nu\partial_z^2{\tilde{A}}=-\beta_u(I_h(\tilde{u})-I_h(u))-\beta_b(I_h(\tilde{b})-I_h(b)),\label{3.11}\\
\partial_t&\tilde{A}^*+(\tilde{u}\cdot\nabla)\tilde{A}^*+\tilde{u}_3{\partial_z{\tilde{A}^*}}+\nabla{\tilde{p}}
-(\tilde{b}\cdot\nabla)\tilde{A}^*+\tilde{b}_3\partial_z\tilde{A}^*\nonumber\\
&-\mu\Delta{\tilde{A}^*}-\nu\partial_z^2{\tilde{A}^*}=-\beta_u(I_h(\tilde{u})-I_h(u))+\beta_b(I_h(\tilde{b})-I_h(b)).\label{3.12}
\end{align}
We also take the inner product of equations above with $ {\vert \tilde{A} \vert}^2\tilde{A}$ and ${\vert \tilde{A}^* \vert}^2\tilde{A}^*$, respectively, and get
\begin{align*}
&\frac{1}{4}\frac{d}{dt}{\vert\vert \tilde{A} \vert\vert}_4^4+\frac{1}{4}\frac{d}{dt}{\vert\vert \tilde{A}^* \vert\vert}_4^4+\mu{\vert\vert \vert \tilde{A} \vert\nabla \tilde{A} \vert\vert}_2^2
+\mu{\vert\vert \vert \tilde{A}^* \vert\nabla \tilde{A}^* \vert\vert}_2^2+\nu{\vert\vert \vert \tilde{A} \vert\partial_z \tilde{A} \vert\vert}_2^2
+\nu{\vert\vert \vert \tilde{A}^* \vert\partial_z \tilde{A}^* \vert\vert}_2^2\\
\leq&-\int_\Omega {\nabla \tilde{p}({\vert \tilde{A} \vert}^2\tilde{A})} \,{\rm d}\Omega-\int_\Omega {\nabla \tilde{p}({\vert \tilde{A}^* \vert}^2\tilde{A}^*)} \,{\rm d}\Omega\\
&+\int_\Omega {-\beta_u\left(I_h(\tilde{u})-I_h(u)\right)({\vert \tilde{A} \vert}^2\tilde{A})} \,{\rm d}\Omega
+\int_\Omega {-\beta_b\left(I_h(\tilde{b})-I_h(b)\right)({\vert \tilde{A} \vert}^2\tilde{A})} \,{\rm d}\Omega\\
&+\int_\Omega {-\beta_u\left(I_h(\tilde{u})-I_h(u)\right)({\vert \tilde{A}^* \vert}^2\tilde{A}^*)} \,{\rm d}\Omega
+\int_\Omega {\beta_b\left(I_h(\tilde{b})-I_h(b)\right)({\vert \tilde{A}^* \vert}^2\tilde{A}^*)} \,{\rm d}\Omega.
\end{align*}
In order to obtain ${\vert\vert \nabla \tilde{p} \vert\vert}_2$, we apply the operator $\int_{-1}^{1} {\nabla \cdot (\cdot)} \,{\rm d}z$ to equation \eqref{3.11} and obtain
\begin{align*}
&\int_{-1}^{1} {\nabla \cdot (\partial_t\tilde{A})} \,{\rm d}z+\int_{-1}^{1} {\nabla \cdot ((\tilde{u}\cdot\nabla)\tilde{A}+\tilde{u}_3\partial_z{\tilde{A}})} \,{\rm d}z
+\int_{-1}^{1} {\nabla \cdot (\nabla{\tilde{p}})} \,{\rm d}z\\
&-\int_{-1}^{1} {\nabla \cdot \left((\tilde{b}\cdot\nabla)A-\tilde{b}_3{\partial_z{\tilde{A}}}\right)} \,{\rm d}z
-\int_{-1}^{1} {\nabla \cdot (\mu\Delta{\tilde{A}}-\nu\partial_z^2{\tilde{A}})} \,{\rm d}z\\
=&\int_{-1}^{1} {\nabla \cdot \left[-\beta_u(I_h(\tilde{u})-I_h(u))-\beta_b\left(I_h(\tilde{b})-I_h(b)\right)\right]} \,{\rm d}z.
\end{align*}
Because of $u_3\vert_{z=0}=0,\;b_3\vert_{z=0}=0$, and from \eqref{2.9}-\eqref{2.10}, we get
$$
u_3=\int_{0}^z {\partial_z u_3} \,{\rm d}z=-\int_{0}^z {\nabla \cdot u} \,{\rm d}z,\;
b_3=\int_{0}^z {\partial_z b_3} \,{\rm d}z=-\int_{0}^z {\nabla \cdot b} \,{\rm d}z.
$$
By the H\"older inequality, we have
$$
{\vert\vert \partial_zu_3 \vert\vert}_2 \leq {\vert\vert \nabla u \vert\vert}_2,\;
{\vert\vert \partial_zb_3 \vert\vert}_2 \leq {\vert\vert \nabla b \vert\vert}_2.
$$
Similarly, we can get
$$
{\vert\vert \partial_z\tilde{u}_3 \vert\vert}_2 \leq {\vert\vert \nabla \tilde{u} \vert\vert}_2,\;
{\vert\vert \partial_z\tilde{b}_3 \vert\vert}_2 \leq {\vert\vert \nabla \tilde{b} \vert\vert}_2.
$$
So by the linear of $I_{h}$ and using the divergence free conditions, we can get the key estimates
\begin{align*}
&\left\vert\left\vert \int_{-1}^{1} {\nabla \cdot \left[ -\beta_u\left(I_h(\tilde{u})-I_h(u)\right)-\beta_b\left(I_h(\tilde{b})-I_h(b)\right)\right]} \,{\rm d}z \right\vert\right\vert_{2,M}\\
\leq& C\left\vert\left\vert \nabla \cdot \left[ -\beta_u(I_h(\tilde{u})-I_h(u))-\beta_b\left(I_h(\tilde{b})-I_h(b)\right)\right] \right\vert\right\vert_{2}\\
=& C\left\vert\left\vert \beta_u\left(I_h(\partial_z\tilde{u}_3)+I_h(\partial_zu_3)\right)+\beta_b\left(I_h(\partial_z\tilde{b}_3)+I_h(\partial_zb_3)\right) \right\vert\right\vert_{2}\\
\leq& C_\beta\left({\vert\vert \partial_z\tilde{u}_3 \vert \vert}_2+{\vert\vert \partial_zu_3 \vert\vert}_2+{\vert\vert \partial_z\tilde{b}_3 \vert\vert}_2+{\vert\vert \partial_zb_3 \vert\vert}_2\right)\\
\leq& C_\beta\left({\vert\vert \nabla \tilde{u} \vert \vert}_2+{\vert\vert \nabla u \vert\vert}_2+{\vert\vert \nabla \tilde{b} \vert\vert}_2+{\vert\vert \nabla b \vert\vert}_2\right).
\end{align*}
Thus, we have
$$
{\vert\vert \nabla \tilde{p} \vert\vert}_2 \leq c{\vert\vert  \tilde{A}\nabla \tilde{A} \vert\vert}_2+C_\beta\left({\vert\vert \nabla \tilde{u} \vert \vert}_2+{\vert\vert \nabla u \vert\vert}_2+{\vert\vert \nabla \tilde{b} \vert\vert}_2+{\vert\vert \nabla b \vert\vert}_2\right).
$$
Similarly, for \eqref{3.12}, we can get
$$
{\vert\vert \nabla \tilde{p} \vert\vert}_2 \leq c{\vert\vert  \tilde{A}^*\nabla \tilde{A}^* \vert\vert}_2+C_\beta\left({\vert\vert \nabla \tilde{u} \vert \vert}_2+{\vert\vert \nabla u \vert\vert}_2+{\vert\vert \nabla \tilde{b} \vert\vert}_2+{\vert\vert \nabla b \vert\vert}_2\right).
$$
So we can get
\begin{align*}
-\int_\Omega {\nabla \tilde{p}({\vert \tilde{A} \vert}^2\tilde{A})} \,{\rm d}\Omega
\leq& {\frac{1}{6}\mu}{\vert\vert \vert \tilde{A} \vert\nabla \tilde{A} \vert\vert}_2^{2}
+C_\mu^\beta\left({\vert\vert \tilde{A} \vert\vert}_2{\vert\vert \nabla \tilde{A} \vert\vert}_2+{\vert\vert \tilde{A} \vert\vert}_2^2{\vert\vert \nabla \tilde{A} \vert\vert}_2^2\right){\vert\vert \tilde{A} \vert\vert}_4^4,\\
-\int_\Omega {\nabla \tilde{p}({\vert \tilde{A}^* \vert}^2\tilde{A}^*)} \,{\rm d}\Omega
\leq& {\frac{1}{6}\mu}{\vert\vert \vert \tilde{A}^* \vert\nabla \tilde{A}^* \vert\vert}_2^{2}
+C_\mu^\beta\left({\vert\vert \tilde{A}^* \vert\vert}_2{\vert\vert \nabla \tilde{A}^* \vert\vert}_2+{\vert\vert \tilde{A}^* \vert\vert}_2^2{\vert\vert \nabla \tilde{A}^* \vert\vert}_2^2\right){\vert\vert \tilde{A}^* \vert\vert}_4^4,
\end{align*}
where
$$
C_\mu^\beta=C_\mu+C_\beta\left({\vert\vert \nabla \tilde{u} \vert \vert}_2+{\vert\vert \nabla u \vert\vert}_2+{\vert\vert \nabla \tilde{b} \vert\vert}_2+{\vert\vert \nabla b \vert\vert}_2\right).
$$
By the H\"older and Young inequalities, we get
\begin{align*}
\int_\Omega {-\beta_u\left(I_h(\tilde{u})-I_h(u)\right)({\vert \tilde{A}\vert}^2\tilde{A})} \,{\rm d}\Omega
\leq& C\beta_u\left( {\vert\vert I_h(\tilde{u}) \vert\vert}_2+{\vert\vert I_h(u) \vert\vert}_2 \right)
{\vert\vert \tilde{A} \vert\vert}_4 {\vert\vert \tilde{A}^2 \vert\vert}_4\\
\leq& C\beta_u C_h^2({\vert\vert \tilde{u} \vert\vert}_2+{\vert\vert u \vert\vert}_2)
%{\vert\vert \tilde{A} \vert\vert}_2{\vert\vert \tilde{A} \vert\vert}_{H^1}
{\vert\vert \tilde{A} \vert\vert}_4
{\vert\vert \tilde{A} \vert\vert}_4
\left({\vert\vert \tilde{A}\nabla\tilde{A} \vert\vert}_2+{\vert\vert \tilde{A}\partial_z\tilde{A} \vert\vert}_2\right)\\
\leq& C^* {\vert\vert \tilde{A} \vert\vert}_4^4+{\frac{1}{6}\mu}{\vert\vert \vert \tilde{A} \vert\nabla \tilde{A} \vert\vert}_2^{2}+{\frac{1}{4}\nu}{\vert\vert \vert \tilde{A} \vert\nabla \tilde{A} \vert\vert}_2^{2}.
%\leq& C\beta_u C_h^2 \left({\vert\vert \tilde{A} \vert\vert}_4^4+{\vert\vert \tilde{A}\nabla\tilde{A} \vert\vert}_2^2+{\vert\vert \tilde{A}\partial_z\tilde{A} \vert\vert}_2^2\right).
\end{align*}
Similarly, we can get
\begin{align*}
\int_\Omega {-\beta_b\left(I_h(\tilde{b})-I_h(b)\right)({\vert \tilde{A}\vert}^2\tilde{A})} \,{\rm d}\Omega
\leq& C^* {\vert\vert \tilde{A} \vert\vert}_4^4+{\frac{1}{6}\mu}{\vert\vert \vert \tilde{A} \vert\nabla \tilde{A} \vert\vert}_2^{2}+{\frac{1}{4}\nu}{\vert\vert \vert \tilde{A} \vert\nabla \tilde{A} \vert\vert}_2^{2},\\
\int_\Omega {-\beta_u\left(I_h(\tilde{u})-I_h(u)\right)({\vert \tilde{A}^*\vert}^2\tilde{A}^*)} \,{\rm d}\Omega
\leq& C^* {\vert\vert \tilde{A}^* \vert\vert}_4^4+{\frac{1}{6}\mu}{\vert\vert \vert \tilde{A}^* \vert\nabla \tilde{A}^* \vert\vert}_2^{2}+{\frac{1}{4}\nu}{\vert\vert \vert \tilde{A}^* \vert\nabla \tilde{A}^* \vert\vert}_2^{2},\\
\int_\Omega {\beta_b\left(I_h(\tilde{b})-I_h(b)\right)({\vert \tilde{A}^*\vert}^2\tilde{A}^*)} \,{\rm d}\Omega
\leq& C^* {\vert\vert \tilde{A}^* \vert\vert}_4^4+{\frac{1}{6}\mu}{\vert\vert \vert \tilde{A}^* \vert\nabla \tilde{A}^* \vert\vert}_2^{2}+{\frac{1}{4}\nu}{\vert\vert \vert \tilde{A}^* \vert\nabla \tilde{A}^* \vert\vert}_2^{2}.
\end{align*}
Thanks to the above estimates, using the Gronwall inequality and the estimates \eqref{3.3}-\eqref{3.4}, we have
\begin{align}
{\vert\vert \tilde{A} \vert\vert}_4^4+{\vert\vert \tilde{A}^* \vert\vert}_4^4
+\int_t^{t+1} {\left({\vert\vert \vert \tilde{A} \vert\nabla \tilde{A} \vert\vert}_2^2
+{\vert\vert \vert \tilde{A}^* \vert\nabla \tilde{A}^* \vert\vert}_2^2+{\vert\vert \vert \tilde{A} \vert\partial_z \tilde{A} \vert\vert}_2^2
+{\vert\vert \vert \tilde{A}^* \vert\partial_z \tilde{A}^* \vert\vert}_2^2\right)} \,{\rm d}s \leq k_0.
\end{align}

\subsection{$L^2$-uniform estimates for $\partial_z u, \partial_z b, \partial_z \tilde{u}, \partial_z \tilde{b}$}

In order to obtain the $H^1$-uniform bounds on $u$ and $b$, we first need to get $L^2$-uniform estimates for $\partial_z u, \partial_z b$.
Taking the inner product of equation \eqref{2.1} and \eqref{2.3} with $-\partial_z^2 u$ and $-\partial_z^2 b$ respectively, we can get
\begin{align*}
&\frac{1}{2}\frac{d}{dt}\left({\vert\vert \partial_z u \vert\vert}_2^2+{\vert\vert \partial_z b \vert\vert}_2^2\right)
+\mu\left({\vert\vert \nabla\partial_z u \vert\vert}_2^2+{\vert\vert \nabla\partial_z b \vert\vert}_2^2\right)
+\nu\left({\vert\vert \partial_z^2 u \vert\vert}_2^2+{\vert\vert \partial_z^2 b \vert\vert}_2^2\right)\\
=& \int_\Omega {[(u\cdot\nabla)u+u_3\partial_z{u}]\partial_z^2 u} \,{\rm d}\Omega
+\int_\Omega {\left[(u\cdot\nabla)b+u_3\partial_z{b}\right]\partial_z^2 b} \,{\rm d}\Omega\\
&+\int_\Omega {\left[(b\cdot\nabla)b+b_3\partial_z{b}\right]\partial_z^2 u} \,{\rm d}\Omega
+\int_\Omega {\left[(b\cdot\nabla)u+b_3\partial_z{u}\right]\partial_z^2 b} \,{\rm d}\Omega.
\end{align*}
Using the method in \cite[Proposition 3.4]{pe3}, we can get
\begin{align*}
&\frac{d}{dt}\left({\vert\vert \partial_z u \vert\vert}_2^2+{\vert\vert \partial_z b \vert\vert}_2^2\right)
+\mu\left({\vert\vert \nabla\partial_z u \vert\vert}_2^2+{\vert\vert \nabla\partial_z b \vert\vert}_2^2\right)
+\nu\left({\vert\vert \partial_z^2 u \vert\vert}_2^2+{\vert\vert \partial_z^2 b \vert\vert}_2^2\right)\\
\leq& C\left({\vert\vert u \vert\vert}_4^8+{\vert\vert u \vert\vert}_4^4+{\vert\vert b \vert\vert}_4^4\right)
\left({\vert\vert \partial_z u \vert\vert}_2^2+{\vert\vert \partial_z b \vert\vert}_2^2\right).
\end{align*}
Considering the Gronwall inequality and \eqref{3.9}, we get that
$\partial_z u$ and $\partial_z b$ are $L^2$ uniformly bounded.
Integrating in time $t$ to $t+1$, we can get the uniform boundedness of the time average of
${\vert\vert \partial_z u \vert\vert}_{H^1}^2$ and ${\vert\vert \partial _z b \vert\vert}_{H^1}^2$.
Thus
\begin{align}
{\vert\vert \partial_z u \vert\vert}_2^2+{\vert\vert \partial_z b \vert\vert}_2^2
+\int_t^{t+1} {\left({\vert\vert \partial_z u \vert\vert}_{H^1}^2+{\vert\vert \partial_z b \vert\vert}_{H^1}^2\right)} \,{\rm d}s \leq k_0.
\end{align}

Similarly, in order to obtain the $H^1$-uniform bounds on $\tilde{u}$ and $\tilde{b}$, we first need to get $L^2$-uniform estimates for $\partial_z \tilde{u}, \partial_z \tilde{b}$.
Taking the inner product of equation \eqref{2.11} and \eqref{2.13} with $-\partial_z^2 \tilde{u}$ and $-\partial_z^2 \tilde{b}$ respectively, by Poincar\'{e}'s inequality, we get the key estimate
\begin{align*}
&\int_\Omega {\beta_u\left(I_h(\tilde{u})-I_h(u)\right)(-\partial_z^2 \tilde{u})} \,{\rm d}\Omega\\
\leq& C\beta_u\left( {\vert\vert I_h(\tilde{u}) \vert\vert}_2+{\vert\vert I_h(u) \vert\vert}_2 \right)
{\vert\vert \partial_z^2 \tilde{u} \vert\vert}_2 \\
\leq& C\beta_u^2\left( {\vert\vert I_h(\tilde{u}) \vert\vert}_2^2+{\vert\vert I_h(u) \vert\vert}_2^2 \right)
+\frac{1}{2}\nu{\vert\vert \partial_z^2 \tilde{u} \vert\vert}_2^2,
\end{align*}
and
\begin{align*}
&\int_\Omega {\beta_b\left(I_h(\tilde{b})-I_h(b)\right)(-\partial_z^2 \tilde{b})} \,{\rm d}\Omega\\
\leq& C\beta_b\left( {\vert\vert I_h(\tilde{b}) \vert\vert}_2+{\vert\vert I_h(b) \vert\vert}_2 \right)
{\vert\vert \partial_z^2 \tilde{b} \vert\vert}_2 \\
\leq& C\beta_b^2\left( {\vert\vert I_h(\tilde{b}) \vert\vert}_2^2+{\vert\vert I_h(b) \vert\vert}_2^2 \right)
+\frac{1}{2}\nu{\vert\vert \partial_z^2 \tilde{b} \vert\vert}_2^2.
\end{align*}
So we can get
\begin{align*}
&\frac{d}{dt}\left({\vert\vert \partial_z \tilde{u} \vert\vert}_2^2+{\vert\vert \partial_z \tilde{b} \vert\vert}_2^2\right)
+\mu\left({\vert\vert \nabla\partial_z \tilde{u} \vert\vert}_2^2+{\vert\vert \nabla\partial_z \tilde{b} \vert\vert}_2^2\right)
+\nu\left({\vert\vert \partial_z^2 \tilde{u} \vert\vert}_2^2+{\vert\vert \partial_z^2 \tilde{b} \vert\vert}_2^2\right)\\
\leq& C\left({\vert\vert \tilde{u} \vert\vert}_4^8+{\vert\vert \tilde{u} \vert\vert}_4^4+{\vert\vert \tilde{b} \vert\vert}_4^4\right)
\left({\vert\vert \partial_z \tilde{u} \vert\vert}_2^2+{\vert\vert \partial_z \tilde{b} \vert\vert}_2^2\right).
\end{align*}
Using the Gronwall inequality, we get that
%$$
%{\vert\vert \partial_z \tilde{u} \vert\vert}_2^2+{\vert\vert \partial_z \tilde{b} \vert\vert}_2^2
%\leq C\left({\vert\vert \partial_z \tilde{u}_0 \vert\vert}_2^2+{\vert\vert \partial_z \tilde{b}_0 \vert\vert}_2^2\right).
%$$
%When $t\in{[t_0,+\infty)}$, %$e^{-Ct}\leq 1$,
$\partial_z \tilde{u}$ and $\partial_z \tilde{b}$ are $L^2$ uniformly bounded.
Integrating in time $t$ to $t+1$, we can get the uniform boundedness of the time average of
${\vert\vert \partial_z \tilde{u} \vert\vert}_{H^1}^2$ and ${\vert\vert \partial _z \tilde{b} \vert\vert}_{H^1}^2$.
Thus
\begin{align}
{\vert\vert \partial_z \tilde{u} \vert\vert}_2^2+{\vert\vert \partial_z \tilde{b} \vert\vert}_2^2
+\int_t^{t+1} {\left({\vert\vert \partial_z \tilde{u} \vert\vert}_{H^1}^2+{\vert\vert \partial_z \tilde{b} \vert\vert}_{H^1}^2\right)} \,{\rm d}s \leq k_0.
\end{align}

\subsection{$H^1$-uniform estimates for $u,b,\tilde{u},\tilde{b}$}

Taking the inner product of equation \eqref{2.1} and \eqref{2.3} with $-\Delta u$ and $-\Delta b$ respectively, we have
\begin{align*}
&\frac{1}{2}\frac{d}{dt}{\vert\vert \nabla u \vert\vert}_2^2+\frac{1}{2}\frac{d}{dt}{\vert\vert \nabla b \vert\vert}_2^2
+\mu\left({\vert\vert \Delta{u} \vert\vert}_2^2+{\vert\vert \Delta{b} \vert\vert}_2^2\right)
+\nu\left({\vert\vert \nabla\partial_z u \vert\vert}_2^2+{\vert\vert \nabla\partial_z b \vert\vert}_2^2\right)\\
\leq& \int_\Omega {[(u\cdot\nabla)u+u_3\partial_z{u}]\Delta u} \,{\rm d}\Omega +\int_\Omega {\left[(u\cdot\nabla)b+u_3\partial_z{b}\right]\Delta b} \,{\rm d}\Omega\\
&+\int_\Omega {\left[(b\cdot\nabla)b+b_3\partial_z{b}\right]\Delta u} \,{\rm d}\Omega +\int_\Omega {\left[(b\cdot\nabla)u+b_3\partial_z{u}\right]\Delta b} \,{\rm d}\Omega\\
=&:M_1+M_2+M_3+M_4,
\end{align*}
where
\begin{align*}
M_1 &= \int_\Omega {[(u\cdot\nabla)u+u_3\partial_z{u}]\Delta u} \,{\rm d}\Omega\\
&=\int_\Omega {(u\cdot\nabla)u\Delta u} \,{\rm d}\Omega
+\int_\Omega {u_3\partial_z{u}\Delta u} \,{\rm d}\Omega\\
&=:M_{11}+M_{12}.
\end{align*}
Using Lemma \ref{lem2.1}, the Young and Poincar\'{e} inequalities, we infer that
\begin{align*}
M_{11} =& \int_\Omega {(u\cdot\nabla)u\Delta u} \,{\rm d}\Omega\\
\leq& \int_M {\left(\int_{-1}^1 {(\vert u \vert+\vert \partial_z u \vert)} \,{\rm d}z\right)
\left(\int_{-1}^1 {\left(\vert \nabla u \vert+\vert \Delta u \vert\right)} \,{\rm d}z\right)} \,{\rm d}M\\
\leq& C\left[{\vert\vert u \vert\vert}_2^{\frac{1}{2}}\left({\vert\vert  u \vert\vert}_2+{\vert\vert \nabla u \vert\vert}_2\right)^{\frac{1}{2}}
+{\vert\vert \partial_z u \vert\vert}_2^{\frac{1}{2}}({\vert\vert \partial_z u \vert\vert}_2+{\vert\vert \nabla \partial_z  u \vert\vert}_2)^{\frac{1}{2}}\right]\\
&\cdot{\vert\vert \nabla u \vert\vert}_2^{\frac{1}{2}}
({\vert\vert \nabla u \vert\vert}_2+{\vert\vert \nabla^2 u \vert\vert}_2)^{\frac{1}{2}}
{\vert\vert \Delta u \vert\vert}_2\\
\leq& {\frac{1}{12}}\mu{\vert\vert \Delta u \vert\vert}_2^2+{\frac{1}{8}}\nu{\vert\vert \nabla\partial_z u \vert\vert}_2^2
+C\left({\vert\vert u \vert\vert}_2^2{\vert\vert \nabla u \vert\vert}_2^2+{\vert\vert \partial_z u \vert\vert}_2^2\right){\vert\vert \nabla u \vert\vert}_2^2.
\end{align*}
Because of $u_3\vert_{z=0}=0$, and from \eqref{2.9}, we get
$$
u_3=\int_{0}^z {\partial_z u_3} \,{\rm d}z=-\int_{0}^z {\nabla \cdot u} \,{\rm d}z.
$$
So we can obtain
\begin{align*}
M_{12} &= \int_\Omega {u_3\partial_z{u}\Delta u} \,{\rm d}\Omega\\
&\leq \int_M {\left(\int_{-1}^1 {\vert \nabla\cdot u \vert} \,{\rm d}z\right)
\left(\int_{-1}^1 {{\vert \partial_z u \vert}{\vert \Delta u \vert}} \,{\rm d}z\right)} \,{\rm d}M\\
&\leq C{\vert\vert u \vert\vert}_2^{\frac{1}{2}}\left({\vert\vert \nabla u \vert\vert}_2+{\vert\vert \nabla^2 u \vert\vert}_2\right)^{\frac{1}{2}}{\vert\vert \partial_z u \vert\vert}_2^{\frac{1}{2}}\left({\vert\vert \partial_z u \vert\vert}_2+{\vert\vert \nabla\partial_z u \vert\vert}_2\right)^{\frac{1}{2}}{\vert\vert \Delta u \vert\vert}_2\\
&\leq {\frac{1}{12}\mu}{\vert\vert \Delta u \vert\vert}_2^2+{\frac{1}{8}}\nu{\vert\vert \nabla\partial_z u \vert\vert}_2^2
+C{\vert\vert \partial_z u \vert\vert}_2^2{\vert\vert \nabla u \vert\vert}_2^2.
\end{align*}
Therefore, we have
\begin{align*}
M_{1} \leq& {\frac{1}{6}}\mu{\vert\vert \Delta u \vert\vert}_2^2+{\frac{1}{4}}\nu{\vert\vert \nabla\partial_z u \vert\vert}_2^2
+C\left({\vert\vert u \vert\vert}_2^2{\vert\vert \nabla u \vert\vert}_2^2+2{\vert\vert \partial_z u \vert\vert}_2^2\right){\vert\vert \nabla u \vert\vert}_2^2.
\end{align*}
Similarly, we can get
\begin{align*}
M_{2} \leq& {\frac{1}{6}}\mu{\vert\vert \Delta b \vert\vert}_2^2+{\frac{1}{12}}\mu{\vert\vert \Delta u \vert\vert}_2^2
+{\frac{1}{8}}\nu{\vert\vert \nabla\partial_z u \vert\vert}_2^2+{\frac{1}{8}}\nu{\vert\vert \nabla\partial_z b \vert\vert}_2^2\\
&+C\left({\vert\vert u \vert\vert}_2^2{\vert\vert \nabla u \vert\vert}_2^2
+{\vert\vert \partial_z u \vert\vert}_2^2\right){\vert\vert \nabla b \vert\vert}_2^2
+C{\vert\vert \partial_z b \vert\vert}_2^2{\vert\vert \nabla u \vert\vert}_2^2,\\
M_{3} \leq& {\frac{1}{6}}\mu{\vert\vert \Delta b \vert\vert}_2^2+{\frac{1}{6}}\mu{\vert\vert \Delta u \vert\vert}_2^2
+{\frac{1}{4}}\nu{\vert\vert \nabla\partial_z b \vert\vert}_2^2\\
&+C\left({\vert\vert b \vert\vert}_2^2{\vert\vert \nabla b \vert\vert}_2^2+2{\vert\vert \partial_z b \vert\vert}_2^2\right){\vert\vert \nabla b \vert\vert}_2^2,\\
M_{4} \leq& {\frac{1}{6}}\mu{\vert\vert \Delta b \vert\vert}_2^2+{\frac{1}{12}}\mu{\vert\vert \Delta u \vert\vert}_2^2
+{\frac{1}{8}}\nu{\vert\vert \nabla\partial_z u \vert\vert}_2^2+{\frac{1}{8}}\nu{\vert\vert \nabla\partial_z b \vert\vert}_2^2\\
&+C\left({\vert\vert b \vert\vert}_2^2{\vert\vert \nabla b \vert\vert}_2^2
+{\vert\vert \partial_z b \vert\vert}_2^2\right){\vert\vert \nabla u \vert\vert}_2^2
+C{\vert\vert \partial_z u \vert\vert}_2^2{\vert\vert \nabla b \vert\vert}_2^2.
\end{align*}
Thus
\begin{align*}
&\frac{d}{dt}\left({\vert\vert \nabla u \vert\vert}_2^2+{\vert\vert \nabla b \vert\vert}_2^2\right)
+\mu\left({\vert\vert \Delta{u} \vert\vert}_2^2+{\vert\vert \Delta{b} \vert\vert}_2^2\right)
+\nu\left({\vert\vert \nabla\partial_z u \vert\vert}_2^2+{\vert\vert \nabla\partial_z b \vert\vert}_2^2\right)\\
\leq& C\left({\vert\vert u \vert\vert}_2^2{\vert\vert \nabla u \vert\vert}_2^2+{\vert\vert \partial_z u \vert\vert}_2^2+{\vert\vert b \vert\vert}_2^2{\vert\vert \nabla b \vert\vert}_2^2+{\vert\vert \partial_z b \vert\vert}_2^2\right)
\left({\vert\vert \nabla u \vert\vert}_2^2+{\vert\vert \nabla b \vert\vert}_2^2\right).
\end{align*}
Considering Gronwall's inequality, we get that
$u$ and $b$ are $H^1$ uniformly bounded.
Integrating in time $t$ to $t+1$, we can get the uniform boundedness of the time average of
${\vert\vert u \vert\vert}_{H^2}^2$ and ${\vert\vert b \vert\vert}_{H^2}^2$.
Thus
\begin{align}
{\vert\vert u \vert\vert}_{H^1}^2+{\vert\vert b \vert\vert}_{H^1}^2
+\int_t^{t+1} {\left({\vert\vert u \vert\vert}_{H^2}^2+{\vert\vert b \vert\vert}_{H^2}^2\right)} \,{\rm d}s \leq k_0.
\end{align}

Similarly, taking the inner product of equation \eqref{2.11} and \eqref{2.13} with $-\Delta \tilde{u}$ and $-\Delta \tilde{b}$ respectively, we have
\begin{align*}
&\frac{1}{2}\frac{d}{dt}\left({\vert\vert \nabla \tilde{u} \vert\vert}_2^2+{\vert\vert \nabla \tilde{b} \vert\vert}_2^2\right)
+\mu\left({\vert\vert \Delta{\tilde{u}} \vert\vert}_2^2+{\vert\vert \Delta{\tilde{b}} \vert\vert}_2^2\right)
+\nu\left({\vert\vert \nabla\partial_z \tilde{u} \vert\vert}_2^2+{\vert\vert \nabla\partial_z \tilde{b} \vert\vert}_2^2\right)\\
\leq& \int_\Omega {\left[(\tilde{u}\cdot\nabla)\tilde{u}+\tilde{u}_3\partial_z{\tilde{u}}\right]\Delta \tilde{u}} \,{\rm d}\Omega +\int_\Omega {\left[(\tilde{u}\cdot\nabla)\tilde{b}+\tilde{u}_3\partial_z{\tilde{b}}\right]\Delta \tilde{b}} \,{\rm d}\Omega\\
&+\int_\Omega {\left[(\tilde{b}\cdot\nabla)\tilde{b}+\tilde{b}_3\partial_z{\tilde{b}}\right]\Delta \tilde{u}} \,{\rm d}\Omega +\int_\Omega {\left[(\tilde{b}\cdot\nabla)\tilde{u}+\tilde{b}_3\partial_z{\tilde{u}}\right]\Delta \tilde{b}} \,{\rm d}\Omega\\
&-\int_\Omega {-\beta_u\left(I_h(\tilde{u})-I_h(u)\right)(-\Delta \tilde{u})} \,{\rm d}\Omega-\int_\Omega {-\beta_b\left(I_h(\tilde{b})-I_h(b)\right)(-\Delta \tilde{b})} \,{\rm d}\Omega.
\end{align*}
We can get the key estimates
\begin{align*}
\int_\Omega {\beta_u\left(I_h(\tilde{u})-I_h(u)\right)(-\Delta \tilde{u})} \,{\rm d}\Omega
\leq& C\beta_u\left( {\vert\vert I_h(\tilde{u}) \vert\vert}_2+{\vert\vert I_h(u) \vert\vert}_2 \right)
{\vert\vert \Delta \tilde{u} \vert\vert}_2 \\
\leq& C\beta_u^2\left( {\vert\vert I_h(\tilde{u}) \vert\vert}_2^2+{\vert\vert I_h(u) \vert\vert}_2^2 \right)
+\frac{1}{4}\mu{\vert\vert \Delta \tilde{u} \vert\vert}_2^2,
\end{align*}
and
\begin{align*}
\int_\Omega {\beta_b\left(I_h(\tilde{b})-I_h(b)\right)(-\Delta \tilde{b})} \,{\rm d}\Omega
\leq& C\beta_b\left( {\vert\vert I_h(\tilde{b}) \vert\vert}_2+{\vert\vert I_h(b) \vert\vert}_2 \right)
{\vert\vert \Delta \tilde{b} \vert\vert}_2 \\
\leq& C\beta_b^2\left( {\vert\vert I_h(\tilde{b}) \vert\vert}_2^2+{\vert\vert I_h(b) \vert\vert}_2^2 \right)
+\frac{1}{4}\mu{\vert\vert \Delta \tilde{b} \vert\vert}_2^2.
\end{align*}
By the above estimates, we have
\begin{align*}
&\frac{d}{dt}\left({\vert\vert \nabla \tilde{u} \vert\vert}_2^2+{\vert\vert \nabla \tilde{b} \vert\vert}_2^2\right)
+\mu\left({\vert\vert \Delta{\tilde{u}} \vert\vert}_2^2+{\vert\vert \Delta{\tilde{b}} \vert\vert}_2^2\right)
+\nu\left({\vert\vert \nabla\partial_z \tilde{u} \vert\vert}_2^2+{\vert\vert \nabla\partial_z \tilde{b} \vert\vert}_2^2\right)\\
\leq& C\left({\vert\vert \tilde{u} \vert\vert}_2^2{\vert\vert \nabla \tilde{u} \vert\vert}_2^2+{\vert\vert \partial_z \tilde{u} \vert\vert}_2^2+{\vert\vert \tilde{b} \vert\vert}_2^2{\vert\vert \nabla \tilde{b} \vert\vert}_2^2+{\vert\vert \partial_z \tilde{b} \vert\vert}_2^2\right)
\left({\vert\vert \nabla \tilde{u} \vert\vert}_2^2+{\vert\vert \nabla \tilde{b} \vert\vert}_2^2\right).
\end{align*}
Uing Gronwall's inequality, we get that
$\tilde{u}$ and $\tilde{b}$ are $H^1$ uniformly bounded.
Integrating in time $t$ to $t+1$, we can get the uniform boundedness of the time average of
${\vert\vert \tilde{u} \vert\vert}_{H^2}^2$ and ${\vert\vert \tilde{b} \vert\vert}_{H^2}^2$.
Thus
\begin{align}
{\vert\vert \tilde{u} \vert\vert}_{H^1}^2+{\vert\vert \tilde{ b} \vert\vert}_{H^1}^2
+\int_t^{t+1} {\left({\vert\vert \tilde{u} \vert\vert}_{H^2}^2+{\vert\vert \tilde{b} \vert\vert}_{H^2}^2\right)} \,{\rm d}s \leq k_0.
\end{align}

\subsection{$H^2$-uniform estimates for $u,b,\tilde{u},\tilde{b}$}

Taking the inner product of equation \eqref{2.1} and \eqref{2.3} with $\Delta^2 u$ and $\Delta^2 b$ respectively, we have
\begin{align*}
&\frac{1}{2}\frac{d}{dt}\left({\vert\vert \Delta u \vert\vert}_2^2+{\vert\vert \Delta b \vert\vert}_2^2\right)
+\mu\left({\vert\vert \nabla\Delta{u} \vert\vert}_2^2+{\vert\vert \nabla\Delta{b} \vert\vert}_2^2\right)
+\nu\left({\vert\vert \partial_z\Delta{u} \vert\vert}_2^2+{\vert\vert \partial_z\Delta{b} \vert\vert}_2^2\right)\\
\leq& \int_\Omega {\nabla[(u\cdot\nabla)u+u_3\partial_z{u}]:\nabla\Delta u} \,{\rm d}\Omega
+\int_\Omega {\nabla\left[(b\cdot\nabla)b+u_3\partial_z{b}\right]:\nabla\Delta u} \,{\rm d}\Omega\\
&+\int_\Omega {\nabla\left[(u\cdot\nabla)b+b_3\partial_z{b}\right]:\nabla\Delta b} \,{\rm d}\Omega
+\int_\Omega {\nabla\left[(b\cdot\nabla)u+b_3\partial_z{u}\right]:\nabla\Delta b} \,{\rm d}\Omega\\
=&:N_1+N_2+N_3+N_4,
\end{align*}
where
\begin{align*}
N_1 &= \int_\Omega {\nabla[(u\cdot\nabla)u+u_3\partial_z{u}]:\nabla\Delta u} \,{\rm d}\Omega\\
&=\int_\Omega {\nabla[(u\cdot\nabla)u]:\nabla\Delta u} \,{\rm d}\Omega
+\int_\Omega {\nabla(u_3\partial_z{u}):\nabla\Delta u} \,{\rm d}\Omega\\
&=:N_{11}+N_{12}.
\end{align*}
Using Lemma \ref{lem2.1}, the Poincar\'{e} and Young inequalities, we infer that
\begin{align*}
N_{11} &=\int_\Omega {\nabla[(u\cdot\nabla)u]:\nabla\Delta u} \,{\rm d}\Omega\\
&\leq \int_\Omega {\left[(\partial_i u\cdot\nabla)u+(u\cdot\partial_i\nabla)u\right]\partial_i\Delta u} \,{\rm d}\Omega\\
&\leq {\frac{1}{16}\mu}{\vert\vert \nabla\Delta u \vert\vert}_2^2
+C{\vert\vert \nabla u \vert\vert}_2^2{\vert\vert \Delta u \vert\vert}_2^4,\\
N_{12} &= \int_\Omega {\nabla(u_3\partial_z{u}):\nabla\Delta u} \,{\rm d}\Omega\\
&\leq \int_M {\left(\int_{-1}^1 {\vert \Delta u \vert} \,{\rm d}z\right)
\left(\int_{-1}^1 {{\vert \partial_z\nabla u \vert}{\vert \nabla\Delta u \vert}} \,{\rm d}z\right)} \,{\rm d}M\\
&\leq C{\vert\vert \Delta u \vert\vert}_2^{\frac{1}{2}}\left({\vert\vert \Delta u \vert\vert}_2+{\vert\vert \nabla\Delta u \vert\vert}_2\right)^{\frac{1}{2}}{\vert\vert \partial_z\nabla u \vert\vert}_2^{\frac{1}{2}}\left({\vert\vert \partial_z\nabla u \vert\vert}_2+{\vert\vert \partial_z\Delta u \vert\vert}_2\right)^{\frac{1}{2}}{\vert\vert \nabla\Delta u \vert\vert}_2\\
&\leq {\frac{1}{16}\mu}{\vert\vert \nabla\Delta u \vert\vert}_2^2+{\frac{1}{4}}\nu{\vert\vert \partial_z\Delta u \vert\vert}_2^2
+C{\vert\vert \partial_z\nabla u \vert\vert}_2^2{\vert\vert \Delta u \vert\vert}_2^2.
\end{align*}
Therefore, we have
\begin{align*}
N_{1} \leq& {\frac{1}{8}}\mu{\vert\vert \nabla\Delta u \vert\vert}_2^2+{\frac{1}{4}}\nu{\vert\vert \partial_z\Delta u \vert\vert}_2^2
+C\left({\vert\vert \nabla u \vert\vert}_2^2{\vert\vert \Delta u \vert\vert}_2^2+{\vert\vert \partial_z\nabla u \vert\vert}_2^2\right){\vert\vert \Delta u \vert\vert}_2^2.
\end{align*}
Similarly, we can get
\begin{align*}
N_{2} \leq& {\frac{1}{8}}\mu{\vert\vert \nabla\Delta u \vert\vert}_2^2+{\frac{1}{10}}\mu{\vert\vert \nabla\Delta b \vert\vert}_2^2+{\frac{1}{4}}\nu{\vert\vert \partial_z\Delta b \vert\vert}_2^2\\
&+C\left({\vert\vert \nabla b \vert\vert}_2^2{\vert\vert \Delta b \vert\vert}_2^4+{\vert\vert \partial_z\nabla b \vert\vert}_2^2{\vert\vert \Delta u \vert\vert}_2^2\right),\\
N_{3} \leq& {\frac{1}{8}}\mu{\vert\vert \nabla\Delta u \vert\vert}_2^2+{\frac{1}{5}}\mu{\vert\vert \nabla\Delta b \vert\vert}_2^2+{\frac{1}{4}}\nu{\vert\vert \partial_z\Delta b \vert\vert}_2^2\\
&+C\left({\vert\vert \nabla u \vert\vert}_2^2{\vert\vert \Delta u \vert\vert}_2^2+{\vert\vert \nabla b \vert\vert}_2^2{\vert\vert \Delta u \vert\vert}_2^2+{\vert\vert \partial_z\nabla b \vert\vert}_2^2\right){\vert\vert \Delta b \vert\vert}_2^2,\\
N_{4} \leq& {\frac{1}{8}}\mu{\vert\vert \nabla\Delta u \vert\vert}_2^2+{\frac{1}{5}}\mu{\vert\vert \nabla\Delta b \vert\vert}_2^2+{\frac{1}{4}}\nu{\vert\vert \partial_z\Delta u \vert\vert}_2^2\\
&+C\left({\vert\vert \nabla u \vert\vert}_2^2{\vert\vert \Delta u \vert\vert}_2^2+{\vert\vert \nabla b \vert\vert}_2^2{\vert\vert \Delta u \vert\vert}_2^2+{\vert\vert \partial_z\nabla u \vert\vert}_2^2\right){\vert\vert \Delta b \vert\vert}_2^2.
\end{align*}
Thus
\begin{align*}
&\frac{d}{dt}\left({\vert\vert \Delta u \vert\vert}_2^2+{\vert\vert \Delta b \vert\vert}_2^2\right)
+\mu\left({\vert\vert \nabla\Delta{u} \vert\vert}_2^2+{\vert\vert \nabla\Delta{b} \vert\vert}_2^2\right)
+\nu\left({\vert\vert \partial_z\Delta{u} \vert\vert}_2^2+{\vert\vert \partial_z\Delta{b} \vert\vert}_2^2\right) \\
\leq& C\left({\vert\vert \nabla u \vert\vert}_2^2{\vert\vert \Delta u \vert\vert}_2^2+{\vert\vert \nabla b \vert\vert}_2^2{\vert\vert \Delta b \vert\vert}_2^2+{\vert\vert \partial_z\nabla u \vert\vert}_2^2+{\vert\vert \partial_z\nabla b \vert\vert}_2^2\right)
\left({\vert\vert \Delta u \vert\vert}_2^2+{\vert\vert \Delta b \vert\vert}_2^2\right).
\end{align*}
Using Gronwall's inequality, we get that
$u$ and $b$ are $H^2$ uniformly bounded.
Integrating in time $t$ to $t+1$, we can get the uniform boundedness of the time average of
${\vert\vert u \vert\vert}_{H^3}^2$ and ${\vert\vert b \vert\vert}_{H^3}^2$.
Thus
\begin{align}
{\vert\vert u \vert\vert}_{H^2}^2+{\vert\vert b \vert\vert}_{H^2}^2
+\int_t^{t+1} {\left({\vert\vert u \vert\vert}_{H^3}^2+{\vert\vert b \vert\vert}_{H^3}^2\right)} \,{\rm d}s \leq k_0.
\end{align}

Similarly, taking the inner product of equation \eqref{2.11} and \eqref{2.13} with $\Delta^2 \tilde{u}$ and $\Delta^2 \tilde{b}$ respectively, we have
\begin{align*}
&\frac{1}{2}\frac{d}{dt}\left({\vert\vert \Delta \tilde{u} \vert\vert}_2^2+{\vert\vert \Delta \tilde{b} \vert\vert}_2^2\right)
+\mu\left({\vert\vert \nabla\Delta{\tilde{u}} \vert\vert}_2^2+{\vert\vert \nabla\Delta{\tilde{b}} \vert\vert}_2^2\right)
+\nu\left({\vert\vert \partial_z\Delta{\tilde{u}} \vert\vert}_2^2+{\vert\vert \partial_z\Delta{\tilde{b}} \vert\vert}_2^2\right)\\
\leq& \int_\Omega {\nabla[(\tilde{u}\cdot\nabla)\tilde{u}+\tilde{u}_3\partial_z{\tilde{u}}]:\nabla\Delta \tilde{u}} \,{\rm d}\Omega
+\int_\Omega {\nabla\left[(\tilde{b}\cdot\nabla)\tilde{b}+\tilde{u}_3\partial_z{\tilde{b}}\right]:\nabla\Delta \tilde{u}} \,{\rm d}\Omega\\
&+\int_\Omega {\nabla\left[(\tilde{u}\cdot\nabla)\tilde{b}+\tilde{b}_3\partial_z{\tilde{b}}\right]:\nabla\Delta \tilde{b}} \,{\rm d}\Omega
+\int_\Omega {\nabla\left[(\tilde{b}\cdot\nabla)\tilde{u}+\tilde{b}_3\partial_z{\tilde{u}}\right]:\nabla\Delta \tilde{b}} \,{\rm d}\Omega\\
&-\int_\Omega {-\beta_u\left(I_h(\tilde{u})-I_h(u)\right)(-\Delta \tilde{u})} \,{\rm d}\Omega-\int_\Omega {-\beta_b\left(I_h(\tilde{b})-I_h(b)\right)(-\Delta \tilde{b})} \,{\rm d}\Omega.
%%=&N_1+N_2+N_3+N_4+N_5+N_6.
\end{align*}
We can get the key estimates
\begin{align*}
\int_\Omega {\beta_u\left(I_h(\tilde{u})-I_h(u)\right)\Delta^2 \tilde{u}} \,{\rm d}\Omega
\leq& C\beta_u\left( {\vert\vert \nabla I_h(\tilde{u}) \vert\vert}_2+{\vert\vert \nabla I_h(u) \vert\vert}_2 \right)
{\vert\vert \nabla\Delta \tilde{u} \vert\vert}_2 \\
\leq& C\beta_u^2\left( {\vert\vert \nabla \tilde{u} \vert\vert}_2^2+{\vert\vert \nabla u \vert\vert}_2^2 \right)
+{\frac{1}{4}}\mu{\vert\vert \nabla\Delta \tilde{u} \vert\vert}_2^2,
\end{align*}
and
\begin{align*}
\int_\Omega {\beta_b\left(I_h(\tilde{b})-I_h(b)\right)\Delta^2 \tilde{b}} \,{\rm d}\Omega
\leq& C\beta_b\left( {\vert\vert \nabla I_h(\tilde{b}) \vert\vert}_2+{\vert\vert \nabla I_h(b) \vert\vert}_2 \right)
{\vert\vert \nabla\Delta \tilde{b} \vert\vert}_2 \\
\leq& C\beta_b^2\left( {\vert\vert \nabla \tilde{b} \vert\vert}_2^2+{\vert\vert \nabla b \vert\vert}_2^2 \right)
+{\frac{1}{4}}\mu{\vert\vert \nabla\Delta \tilde{b} \vert\vert}_2^2.
\end{align*}
By the above estimates, we have
\begin{align}
{\vert\vert \tilde{u} \vert\vert}_{H^2}^2+{\vert\vert \tilde{b} \vert\vert}_{H^2}^2
+\int_t^{t+1} {\left({\vert\vert \tilde{u} \vert\vert}_{H^3}^2+{\vert\vert \tilde{b} \vert\vert}_{H^3}^2\right)} \,{\rm d}s \leq k_0.
\end{align}

Using the uniformly bounded estimates obtained previously, we follow similar arguments as in \cite[Proposition 3.6]{pe3} to show that $\vert\vert (u_t,b_t) \vert\vert_{H^1}$ and $\vert\vert (\tilde{u}_t,\tilde{b}_t) \vert\vert_{H^1}$ are also uniformly bounded. Hence, following the argument in \cite{pe5}, we can get the existence of a global attractor for both the PEM and its assimilation system. Moreover, we derive the following two propositions.

\begin{proposition}
Let $({u}_0, {b}_0)\in H^2$. Then system \eqref{2.1}-\eqref{2.4} has a unique strong solution satisfying that
\begin{align*}
&({u},{b})\in C(0,+\infty;H^2)\cap L^2(0,+\infty;H^3),\\
&(\partial_t {u},\partial_t {b})\in L^2(0,+\infty;H^1).
\end{align*}
Moreover, the semigroup generated by problem \eqref{2.1}-\eqref{2.4} has a global attractor.
\end{proposition}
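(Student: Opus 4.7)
The proposition collects together all the work done in Subsections 3.1--3.5 into a standard existence/uniqueness/attractor package, so the proposal is essentially to show that the uniform estimates already derived are enough to execute a Galerkin argument. I would start by constructing finite-dimensional approximations $(u^n, b^n)$ by projecting \eqref{2.1}--\eqref{2.4} onto the first $n$ eigenmodes of the operators $A_u, A_b$ (equivalently, Fourier modes compatible with the periodicity and the stated $z$-symmetries). For these ODE systems, local existence is immediate, and the $L^2$, $L^4$, $\partial_z L^2$, $H^1$ and $H^2$ estimates carried out in Subsections 3.1--3.4 carry over verbatim to $(u^n, b^n)$ because they only use test-function identities that survive Galerkin truncation. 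This upgrades the local solutions to global ones and provides the uniform bounds
\[
(u^n,b^n) \in L^\infty(0,\infty;H^2)\cap L^2_{\mathrm{loc}}(0,\infty;H^3).
\]

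Next I would derive the time-derivative bound $(\partial_t u^n,\partial_t b^n)\in L^2_{\mathrm{loc}}(0,\infty;H^1)$. Here one differentiates \eqref{2.1}--\eqref{2.3} in $t$ (or, more cleanly, reads it off directly by estimating each nonlinear term in $H^{-1}$ or $H^1$ using the Sobolev/anisotropic inequalities of Lemma \ref{lem2.1} and Lemma \ref{lem2.3}); the uniform $H^2$ control on $(u^n,b^n)$ makes each term, including the pressure gradient recovered as in Subsection 3.2, bounded in $L^2_tH^1_x$. The authors flag that this step is parallel to \cite[Proposition 3.6]{pe3}, and that is where I expect the main technical work to sit, since bounding $u_3\partial_z u$ and $(b\cdot\nabla)b$ in $H^1$ requires careful use of the incompressibility relation \eqref{2.9}--\eqref{2.10} together with the $H^3$ integrability in time.

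With these bounds, the Aubin--Lions lemma yields a subsequence converging strongly in $L^2_{\mathrm{loc}}(0,\infty;H^2)$ and weakly-$\ast$ in $L^\infty(0,\infty;H^2)$ to a limit $(u,b)$. Passage to the limit in the nonlinearities is standard under this strong convergence, and the limit satisfies \eqref{2.1}--\eqref{2.4} in the strong sense. Continuity $(u,b)\in C(0,\infty;H^2)$ follows from Lemma \ref{lem2.4} applied with $V=H^3$, $H=H^2$, $V'=H^1$, using the $L^2(0,T;H^3)$ and $L^2(0,T;H^1)$ bounds on the solution and its time derivative. Uniqueness is obtained by taking the difference of two strong solutions, pairing with the difference in $L^2$, and using that the $H^2$-norms of each solution control the nonlinear terms via Lemma \ref{lem2.3}; a Gronwall argument closes the estimate.

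Finally, for the global attractor, the uniform-in-time $H^2$ bound provides a bounded absorbing set $\mathcal{B}\subset H^2$, while the $L^2_{\mathrm{loc}}(0,\infty;H^3)$ and $L^2_{\mathrm{loc}}(0,\infty;H^1)$ bounds on $(u,b)$ and $(\partial_t u,\partial_t b)$, combined with Aubin--Lions, give asymptotic compactness of the solution semigroup $S(t)$ in $H^2$. Invoking the standard framework used in \cite{pe5}, $\mathcal{A} = \omega(\mathcal{B})$ is the desired global attractor. The main obstacle, as noted, is the $H^1$ estimate on the time derivatives; everything else is either already done in the preceding subsections or is a direct application of classical semigroup/attractor machinery.
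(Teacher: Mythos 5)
Your proposal matches the paper's (largely implicit) argument: the authors prove this proposition precisely by collecting the uniform $L^2$, $L^4$, $\partial_z$, $H^1$ and $H^2$ estimates of Subsections 3.1--3.5, invoking \cite[Proposition 3.6]{pe3} for the $H^1$ bound on $(\partial_t u,\partial_t b)$, and citing \cite{pe5} for the global attractor, exactly as you outline. Your Galerkin construction, Aubin--Lions compactness, use of Lemma \ref{lem2.4} for continuity in $H^2$, and Gronwall-based uniqueness are the standard scaffolding the paper leaves unstated, and they are correctly assembled.
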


\begin{proposition}
Let $(\tilde{u}_0, \tilde{b}_0)\in H^2$ and $\{(C_h^1)^2\beta_uh^2,(C_h^1)^2\beta_bh^2\}\leq\min\{\mu,\nu\}$. Then the assimilation system \eqref{2.11}-\eqref{2.14} has a unique strong solution satisfying that
\begin{align*}
&(\tilde{u},\tilde{b})\in C(0,+\infty;H^2)\cap L^2(0,+\infty;H^3),\\
&(\partial_t \tilde{u},\partial_t \tilde{b})\in L^2(0,+\infty;H^1).
\end{align*}
Moreover, the semigroup generated by problem \eqref{2.11}-\eqref{2.14} has a global attractor.
\end{proposition}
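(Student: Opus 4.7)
My plan is to establish existence by Galerkin approximation, uniqueness by a differencing/Gronwall argument, temporal continuity in $H^2$ via Lemma \ref{lem2.4}, and the global attractor by combining the absorbing-ball and asymptotic-compactness properties that follow from the a priori bounds — every step leaning directly on the uniform estimates already derived in Subsections 3.1 through 3.5 for the assimilation variables $(\tilde u,\tilde b)$.

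For existence I would construct a sequence $(\tilde u^{(n)},\tilde b^{(n)})$ of finite-dimensional Galerkin approximations, projecting the assimilation system \eqref{2.11}-\eqref{2.14} onto the span of the first $n$ eigenfunctions of a Stokes-type operator compatible with the periodicity and symmetry constraints. Local-in-time existence for this ODE system is routine, and the smallness hypothesis $\max\{(C_h^1)^2\beta_u h^2,(C_h^1)^2\beta_b h^2\}\le\min\{\mu,\nu\}$ is exactly the condition used in Subsection 3.1 to absorb the feedback terms $-\beta_u(I_h(\tilde u)-I_h(u))$ and $-\beta_b(I_h(\tilde b)-I_h(b))$ into the dissipation. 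Applying the full chain of $L^2$, $L^4$, $\partial_z L^2$, $H^1$ and $H^2$ estimates to $(\tilde u^{(n)},\tilde b^{(n)})$ — with $(u,b)$ on the right-hand side supplied by the preceding proposition — yields bounds uniform in $n$ in $L^\infty(0,\infty;H^2)\cap L^2_{\mathrm{loc}}(0,\infty;H^3)$, and, after testing against $\partial_t \tilde u^{(n)}$ and $\partial_t \tilde b^{(n)}$, uniform bounds in $L^2_{\mathrm{loc}}(0,\infty;H^1)$ for the time derivatives. By Aubin--Lions I extract a subsequence converging strongly in $L^2(0,T;H^2)$ and weakly in $L^2(0,T;H^3)$, which suffices to pass to the limit in every nonlinear term; the feedback contribution passes to the limit by linearity of $I_h$ and the continuity bound \eqref{2.16}.

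For uniqueness I subtract two strong solutions $(\tilde u_1,\tilde b_1)$, $(\tilde u_2,\tilde b_2)$ with the same data and observe that the feedback terms collapse to $-\beta_u I_h(w)$ and $-\beta_b I_h(\tilde w)$, where $w=\tilde u_1-\tilde u_2$ and $\tilde w=\tilde b_1-\tilde b_2$, because the $I_h(u),I_h(b)$ pieces cancel. Testing with $(w,\tilde w)$, controlling the nonlinearities by Lemma \ref{lem2.3} together with the uniform $H^2$ bounds, and absorbing the feedback by \eqref{2.15}-\eqref{2.16} under the same smallness on $h$, I obtain $\frac{d}{dt}(\|w\|_2^2+\|\tilde w\|_2^2)\le g(t)(\|w\|_2^2+\|\tilde w\|_2^2)$ with $g\in L^1_{\mathrm{loc}}$, whence Gronwall gives $w\equiv\tilde w\equiv 0$. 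The continuity $(\tilde u,\tilde b)\in C([0,\infty);H^2)$ is then an immediate application of Lemma \ref{lem2.4} with $V=H^3$, $H=H^2$, $V'=H^1$, since $(\tilde u,\tilde b)\in L^2_{\mathrm{loc}}(0,\infty;H^3)$ and $(\partial_t\tilde u,\partial_t\tilde b)\in L^2_{\mathrm{loc}}(0,\infty;H^1)$ have been obtained.

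For the global attractor, the $H^2$ estimate in Subsection 3.5 already furnishes a sufficiently large ball in $H^2$ that is absorbing for the semigroup $S(t)$ generated by the assimilation system; the uniform bound $\int_t^{t+1}(\|\tilde u\|_{H^3}^2+\|\tilde b\|_{H^3}^2)\,ds\le k_0$ together with the compact embedding $H^3\hookrightarrow H^2$ supplies asymptotic compactness in $H^2$; and the $H^2$-continuity of $S(t)$ follows from the same differencing argument used for uniqueness. The standard abstract theorem then yields a global attractor. The main technical obstacle I expect lies at the $H^2$ level, where the feedback contribution $\int_\Omega \beta_u(I_h(\tilde u)-I_h(u))\Delta^2\tilde u\,d\Omega$ forces one to control $\nabla I_h(\tilde u)$ and $\nabla I_h(u)$ in $L^2$; this is precisely the step at the end of Subsection 3.5 where one integrates by parts, invokes \eqref{2.15}-\eqref{2.16} and Young's inequality, and uses the stated smallness of $h$ to absorb the resulting term into $\mu\|\nabla\Delta\tilde u\|_2^2$ — so the hypothesis is tuned exactly so as to close this estimate.
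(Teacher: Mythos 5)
Your proposal is correct and follows essentially the same route as the paper, which establishes this proposition precisely through the chain of uniform $L^2$, $L^4$, $\partial_z$-, $H^1$- and $H^2$-estimates of Subsections 3.1--3.5, invoking \cite{pe3} for the Galerkin/time-derivative machinery and \cite{pe5} for the global attractor. The only minor inaccuracy is in your closing paragraph: the smallness condition on $h$ is consumed in the $L^2$ estimate of Subsection 3.1 (absorbing the nudging term into the dissipation there), whereas at the $H^2$ level the feedback contribution is simply split by Young's inequality and absorbed by $\tfrac{1}{4}\mu\Vert \nabla\Delta\tilde{u}\Vert_2^2$ with the remainder controlled by the lower-order $H^1$ bounds, so no further restriction on $h$ is needed at that stage.
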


Next, we consider the convergence relation between the assimilation solution and the reference solution.

\begin{theorem}
Let $(u,b)$ and $(\tilde{u},\tilde{b})$ be the unique solution of the PEM system \eqref{2.1}-\eqref{2.4} and its assimilation system \eqref{2.11}-\eqref{2.14}, respectively. Suppose that $\beta_u,\beta_b\; and\; h$ satisfy
$$
\beta_u \geq 2(\Re_1+\Re_2+\Re_3),\;
\beta_b \geq 2(\Re_1+\Re_2+\Re_3),
$$
and
$$
0 < h \leq \min\left\{\frac{1}{\beta_u C_h^1}\sqrt{\frac{\mu\nu}{\mu+4\nu}},\;
\frac{1}{\beta_b C_h^1}\sqrt{\frac{\mu\nu}{\mu+4\nu}}\right\},
$$
then $(\tilde{u},\tilde{b})$ converges to $(u,b)$ exponentially in $L^2$ as $t\rightarrow\infty$. Here $\Re_1,\Re_2,\Re_3$ are constants defined in \eqref{r1}-\eqref{r3} below.
\end{theorem}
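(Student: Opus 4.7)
Define the error variables $w = \tilde u - u$ and $v = \tilde b - b$. Subtracting the reference system \eqref{2.1}--\eqref{2.4} from the assimilation system \eqref{2.11}--\eqref{2.14} produces evolution equations for $(w,v)$ whose every nonlinearity can be split via
\[
B(X_1,Y_1) - B(X_2,Y_2) = B(X_1-X_2,\, Y_1) + B(X_2,\, Y_1-Y_2),
\]
so that the error sits either in the first slot against a smooth factor or in the second slot behind a divergence-free field, in which case the contribution vanishes against the test function by $(B(\phi,\psi),\psi)=0$. Taking the $L^2$ pairing of the $w$-equation with $w$ and the $v$-equation with $v$ and summing, the pressure vanishes by incompressibility together with $\partial_z \tilde p = \partial_z p = 0$, and the left-hand side produces the full dissipation $\mu\|\nabla w\|_2^2 + \nu\|\partial_z w\|_2^2 + \mu\|\nabla v\|_2^2 + \nu\|\partial_z v\|_2^2$.

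The surviving nonlinear terms---$(B(w,\tilde u),w)$, $(B(v,\tilde b),w)$, $(B(b,v),w)$, and the three analogues in the $v$-equation---are estimated by Lemma \ref{lem2.3} combined with Young's inequality, invoking the $H^2$-uniform bounds on $u,b,\tilde u,\tilde b$ established in \S 3.1--3.5. The ``smooth'' factor in each term carries a uniform $H^2$ bound, so each contribution can be dominated by $\tfrac{1}{16}\min\{\mu,\nu\}(\|w\|_{H^1}^2 + \|v\|_{H^1}^2) + C(\|w\|_2^2 + \|v\|_2^2)$, the zero-order constants aggregating into the three quantities $\Re_1,\Re_2,\Re_3$ of the statement. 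The feedback terms are treated by writing $-\beta_u(I_h(w),w) = -\beta_u\|w\|_2^2 + \beta_u(w-I_h(w),w)$, bounding the remainder by $\beta_u C_h^1 h \|w\|_{H^1}\|w\|_2$ via \eqref{2.15}, and splitting into $\|\nabla w\|_2$ and $\|\partial_z w\|_2$ contributions by Young's inequality; the smallness bound $h \le (\beta_u C_h^1)^{-1}\sqrt{\mu\nu/(\mu+4\nu)}$ is exactly what is needed so that the resulting terms are absorbed into $\mu\|\nabla w\|_2^2$ and $\nu\|\partial_z w\|_2^2$ on the left, with the analogous statement for $v$.

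Collecting everything yields
\[
\frac{d}{dt}\bigl(\|w\|_2^2 + \|v\|_2^2\bigr) + \bigl(\min\{\beta_u,\beta_b\} - 2(\Re_1+\Re_2+\Re_3)\bigr)\bigl(\|w\|_2^2 + \|v\|_2^2\bigr) \leq 0,
\]
and the hypothesis $\beta_u,\beta_b \ge 2(\Re_1+\Re_2+\Re_3)$ together with Gronwall's inequality delivers exponential decay in $L^2$. The main difficulty is the bookkeeping of the six cross-coupling nonlinear terms between velocity and magnetic field so that their coefficients collapse into just three constants $\Re_i$; a natural simplification---parallel to the Els\"asser-variable strategy of \S 3.2---is to work instead with the sum and difference $W = w+v$, $W^* = w-v$, whose evolution equations reduce each cross-coupling block to a single advective nonlinearity of the form $B(W^*,\tilde A) + B(A^*, W)$ (and its $W^*$ analogue), after which the only term to estimate on the right is $(B(W^*,\tilde A),W)$ and its mirror, making the application of Lemma \ref{lem2.3} fully symmetric.
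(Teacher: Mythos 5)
Your overall architecture matches the paper's: form the difference system, exploit $(B(\phi,\psi),\psi)=0$ so that only the terms with the error in the advecting slot survive, split the nudging term as $-\beta\|\cdot\|_2^2$ plus a remainder controlled by \eqref{2.15}, and close with Gronwall; the treatment of the feedback terms and the role of the hypotheses on $\beta_u,\beta_b,h$ are exactly as in the paper. However, there is a genuine gap in the one step that carries all the technical weight: the estimate of the surviving nonlinear terms. You propose to bound $(B(w,u),w)$ and its five analogues by Lemma \ref{lem2.3}, but that lemma reads
\[
\Bigl|\int_\Omega (\varphi\cdot\nabla_3\phi)\psi\,{\rm d}\Omega\Bigr|\le C\|\nabla_3\varphi_H\|_2^{1/2}\|\Delta_3\varphi_H\|_2^{1/2}\|\nabla_3\phi\|_2^{1/2}\|\Delta_3\phi\|_2^{1/2}\|\psi\|_2,
\]
and in every surviving term the error field is the advecting field $\varphi$ (the terms with the error only in the advected slot are precisely the ones that cancel). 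The only slot that costs merely an $L^2$ norm is $\psi$; the $\varphi$ slot costs $\|\Delta_3 w\|_2^{1/2}$, i.e.\ the $H^2$ seminorm of the \emph{error}. After Young's inequality you are left with a term proportional to $\|\nabla_3 w\|_2\|\Delta_3 w\|_2$ (or worse), which cannot be absorbed by the dissipation $\mu\|\nabla w\|_2^2+\nu\|\partial_z w\|_2^2$ available in an $L^2$-level energy estimate, nor dominated by $\tfrac{1}{16}\min\{\mu,\nu\}\|w\|_{H^1}^2+C\|w\|_2^2$ as you claim. Integrating by parts to rewrite the term as $-(B(w,w),u)$ does not help: the advecting field is still $w$. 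Lemma \ref{lem2.3} is the right tool at the $H^1$ level (as in Proposition \ref{prop.dqs}, where one tests against $-\Delta\dot u-\partial_z^2\dot u$ and the dissipation does control $H^2$), but not here.

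The paper closes this step with Lemma \ref{lem2.1} instead: the anisotropic estimate lets one place only $\|w\|_2$, $\|\nabla w\|_2$, $\|\partial_z w\|_2$ on the error factors and push all higher regularity onto the reference and assimilation solutions, whose $H^2$ bounds are uniform by \S 3.1--3.5; this is also what handles the vertical terms $\hat u_3\partial_z u$, since $\hat u_3=-\int_0^z\nabla\cdot\hat u\,{\rm d}\xi$ only costs one horizontal derivative of the error inside the vertical integral. Your proof would be repaired by substituting Lemma \ref{lem2.1} for Lemma \ref{lem2.3} throughout this step. Two smaller remarks: the paper retains a fraction $\tfrac{\mu}{16}\|\nabla\hat u\|_2^2$ of the dissipation on the left, which via Poincar\'e supplies a strictly negative multiple of $\|\hat u\|_2^2$ and secures exponential decay even when $\beta_u=2(\Re_1+\Re_2+\Re_3)$ exactly, whereas your final inequality degenerates in that borderline case; and your proposed Els\"asser reduction of the error system combines the feedback terms into $\beta_uI_h(w)\pm\beta_bI_h(v)$, which no longer has a sign in the $W$, $W^*$ variables unless $\beta_u=\beta_b$, so the paper's componentwise bookkeeping is not so easily avoided.
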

\begin{proof}
Set $$\hat{u}=u-\tilde{u}, \hat{u}_3=u_3-\tilde{u}_3, \hat{b}=b-\tilde{b}, \hat{b}_3=b_3-\tilde{b}_3, \hat{p}=p-\tilde{p}.$$
Subtracting the assimilation equations from the PEM equations, we have
\begin{align}
\partial_t&{\hat{u}}+B(\hat{u},u)+B(u,\hat{u})-B(\hat{u},\hat{u})+\nabla \hat{p}\nonumber\\
&-B(\hat{b},b)-B(b,\hat{b})+B(\hat{b},\hat{b})-\mu\Delta\hat{u}-\nu{\partial_z^2}\hat{u}=-\beta_u {I_h(\hat{u})},\label{3.19}\\
\partial_z&{\hat{p}}=0,\\
\partial_t&{\hat{b}}+B(\hat{u},b)+B(u,\hat{b})-B(\hat{u},\hat{b})\nonumber\\
&-B(\hat{b},u)-B(b,\hat{u})+B(\hat{b},\hat{u})-\kappa\Delta\hat{b}-\sigma{\partial_z^2}\hat{b}=-\beta_b {I_h(\hat{b})},\label{3.21}\\
\nabla&\cdot{\hat{u}}+\partial_z{{\hat{u}}_3}=0,\; \nabla\cdot{\hat{b}}+\partial_z{{\hat{b}}_3}=0.
\end{align}
Taking the inner product of equation \eqref{3.19} and \eqref{3.21} with $\hat{u}$ and $\hat{b}$ respectively, we have
\begin{align*}
&\frac{1}{2}\frac{d}{dt}\left({\vert\vert \hat{u} \vert\vert}_2^2+{\vert\vert \hat{b} \vert\vert}_2^2\right)
+\mu{\vert\vert \nabla{\hat{u}} \vert\vert}_2^2+\nu{\vert\vert \partial_z{\hat{u}} \vert\vert}_2^2
+\kappa{\vert\vert \nabla{\hat{b}} \vert\vert}_2^2+\sigma{\vert\vert \partial_z{\hat{b}} \vert\vert}_2^2\\
=&-\int_\Omega {\nabla \hat{p} \cdot \hat{u}} \,{\rm d}\Omega-\beta_u\int_\Omega {I_h(\hat{u}) \hat{u}} \,{\rm d}\Omega
-\beta_b\int_\Omega {I_h(\hat{b}) \hat{b}} \,{\rm d}\Omega\\
&-\int_\Omega {\left[(\hat{u}\cdot\nabla)u+\hat{u}_3\partial_zu\right] \hat{u}} \,{\rm d}\Omega
+\int_\Omega {\left[(\hat{b}\cdot\nabla)b+\hat{b}_3\partial_zb\right] \hat{u}} \,{\rm d}\Omega\\
&-\int_\Omega {\left[(\hat{u}\cdot\nabla)b+\hat{u}_3\partial_zb\right] \hat{b}} \,{\rm d}\Omega
+\int_\Omega {\left[(\hat{b}\cdot\nabla)u+\hat{b}_3\partial_zu\right] \hat{b}} \,{\rm d}\Omega\\
=&: J_{1}+J_{2}+J_{3}+I_{1}+I_{2}+I_{3}+I_{4}.
\end{align*}
Using Lemma \ref{lem2.1}, we infer that
\begin{align*}
I_{1} =& \int_\Omega {\left[(\hat{u}\cdot\nabla)u+\hat{u}_3\partial_zu\right] \hat{u}} \,{\rm d}\Omega\\
\leq& \int_M {\left(\int_{-1}^1 {(\vert \hat{u} \vert+\vert \partial_z \hat{u} \vert)} \,{\rm d}z\right)
\left(\int_{-1}^1 {\vert \nabla u \vert \vert \hat{u} \vert} \,{\rm d}z\right)} \,{\rm d}M\\
&+ \int_M {\left(\int_{-1}^1 {\nabla \hat{u}} \,{\rm d}z\right)\left(\int_{-1}^1 {\vert \partial_z u \vert \vert \hat{u} \vert} \,{\rm d}z\right)} \,{\rm d}M\\
\leq& C{\vert\vert \hat{u} \vert\vert}_2^{\frac{1}{2}}\left({\vert\vert  \hat{u} \vert\vert}_2^{\frac{1}{2}}+{\vert\vert \nabla \hat{u} \vert\vert}_2^{\frac{1}{2}}\right)
{\vert\vert \nabla u \vert\vert}_2
{\vert\vert \hat{u} \vert\vert}_2^{\frac{1}{2}}\left({\vert\vert \hat{u} \vert\vert}_2^{\frac{1}{2}}+{\vert\vert \nabla \hat{u} \vert\vert}_2^{\frac{1}{2}}\right)\\
&+C{\vert\vert \partial_z\hat{u} \vert\vert}_2
{\vert\vert \nabla u \vert\vert}_2^{\frac{1}{2}}\left({\vert\vert  \nabla u \vert\vert}_2^{\frac{1}{2}}+{\vert\vert \Delta u \vert\vert}_2^{\frac{1}{2}}\right)
\left({\vert\vert \hat{u} \vert\vert}_2+{\vert\vert  \hat{u} \vert\vert}_2^{\frac{1}{2}}{\vert\vert \nabla \hat{u} \vert\vert}_2^{\frac{1}{2}}\right)\\
&+C{\vert\vert \nabla\hat{u} \vert\vert}_2
{\vert\vert \partial_z u \vert\vert}_2^{\frac{1}{2}}\left({\vert\vert  \partial_z u \vert\vert}_2^{\frac{1}{2}}+{\vert\vert \nabla\partial_z u \vert\vert}_2^{\frac{1}{2}}\right)
\left({\vert\vert \hat{u} \vert\vert}_2+{\vert\vert  \hat{u} \vert\vert}_2^{\frac{1}{2}}{\vert\vert \nabla \hat{u} \vert\vert}_2^{\frac{1}{2}}\right)\\
=&:I_{11}+I_{12}+I_{13}.
\end{align*}
Using the Young inequality, we infer that
\begin{align*}
I_{11} & \leq Ck_0{\vert\vert \hat{u} \vert\vert}_2\left({\vert\vert \hat{u} \vert\vert}_2+{\vert\vert \nabla\hat{u} \vert\vert}_2+{\vert\vert \hat{u} \vert\vert}_2^{\frac{1}{2}}{\vert\vert \nabla\hat{u} \vert\vert}_2^{\frac{1}{2}}\right)\\
& \leq Ck_0{\vert\vert \hat{u} \vert\vert}_2^2+Ck_0{\vert\vert \hat{u} \vert\vert}_2{\vert\vert \nabla\hat{u} \vert\vert}_2+2Ck_0{\vert\vert \hat{u} \vert\vert}_2^{\frac{1}{2}}{\vert\vert \nabla\hat{u} \vert\vert}_2^{\frac{1}{2}}\\
& \leq \left[Ck_0+\frac{4C^2k_0^2}{\mu}+\frac{3}{4}(2Ck_0)^\frac{4}{3}\left(\frac{4}{\mu}\right)^\frac{1}{3}\right]{\vert\vert \hat{u} \vert\vert}_2^2+\frac{\mu}{8}{\vert\vert \nabla\hat{u} \vert\vert}_2^2,\\
I_{12} & \leq Ck_0{\vert\vert \partial_z\hat{u} \vert\vert}_2\left({\vert\vert \hat{u} \vert\vert}_2+{\vert\vert  \hat{u} \vert\vert}_2^{\frac{1}{2}}{\vert\vert \nabla \hat{u} \vert\vert}_2^{\frac{1}{2}}\right)\\
& \leq \frac{\nu}{4}{\vert\vert \partial_z\hat{u} \vert\vert}_2^2+\frac{2C^2k_0^2}{\nu}\left({\vert\vert \hat{u} \vert\vert}_2+{\vert\vert  \hat{u} \vert\vert}_2{\vert\vert \nabla \hat{u} \vert\vert}_2\right)\\
& \leq \left(\frac{2C^2k_0^2}{\nu}+\frac{16C^4k_0^4}{\mu\nu^2}\right){\vert\vert \hat{u} \vert\vert}_2^2+\frac{\nu}{4}{\vert\vert \partial_z\hat{u} \vert\vert}_2^2+\frac{\mu}{16}{\vert\vert \nabla\hat{u} \vert\vert}_2^2,\\
I_{13} & \leq Ck_0{\vert\vert \nabla \hat{u} \vert\vert}_2{\vert\vert \hat{u} \vert\vert}_2+Ck_0{\vert\vert \nabla \hat{u} \vert\vert}_2^\frac{3}{2}{\vert\vert \hat{u} \vert\vert}_2^\frac{1}{2}\\
& \leq \frac{\mu}{8}{\vert\vert \nabla \hat{u} \vert\vert}_2^2+\left(\frac{4C^2k_0^2}{\mu}+\frac{432C^4k_0^4}{\mu^3}\right){\vert\vert \hat{u} \vert\vert}_2^2.
\end{align*}
Thus
\begin{align*}
I_{1}\leq& \frac{5\mu}{16}{\vert\vert \nabla \hat{u} \vert\vert}_2^2+\frac{\nu}{4}{\vert\vert \partial_z\hat{u} \vert\vert}_2^2\\
&+\left[Ck_0+\frac{8C^2k_0^2}{\mu}+\frac{3}{4}(2Ck_0)^\frac{4}{3}\left(\frac{4}{\mu}\right)^\frac{1}{3}+
\frac{2C^2k_0^2}{\nu}+\frac{16C^4k_0^4}{\mu\nu^2}+\frac{432C^4k_0^4}{\mu^3}\right]{\vert\vert \hat{u} \vert\vert}_2^2\\
=&: \frac{5\mu}{16}{\vert\vert \nabla \hat{u} \vert\vert}_2^2+\frac{\nu}{4}{\vert\vert \partial_z\hat{u} \vert\vert}_2^2+\Re_1{\vert\vert \hat{u} \vert\vert}_2^2.
\end{align*}
Using Lemma \ref{lem2.1} and the Young inequality, we infer that
$$
\begin{aligned}
I_{2} =& \int_\Omega {\left[(\hat{b}\cdot\nabla)b+\hat{b}_3\partial_zb\right] \hat{u}} \,{\rm d}\Omega\\
\leq& \int_M {\left(\int_{-1}^1 {(\vert \hat{b} \vert+\vert \partial_z \hat{b} \vert)} \,{\rm d}z\right)
\left(\int_{-1}^1 {\vert \nabla b \vert \vert \hat{u} \vert} \,{\rm d}z\right)} \,{\rm d}M\\
&+ \int_M {\left(\int_{-1}^1 {\nabla \hat{b}} \,{\rm d}z\right)\left(\int_{-1}^1 {\vert \partial_z b \vert \vert \hat{u} \vert} \,{\rm d}z\right)} \,{\rm d}M\\
\leq& C{\vert\vert \hat{b} \vert\vert}_2^{\frac{1}{2}}\left({\vert\vert  \hat{b} \vert\vert}_2^{\frac{1}{2}}+{\vert\vert \nabla \hat{b} \vert\vert}_2^{\frac{1}{2}}\right)
{\vert\vert \nabla b \vert\vert}_2
{\vert\vert \hat{u} \vert\vert}_2^{\frac{1}{2}}\left({\vert\vert \hat{u} \vert\vert}_2^{\frac{1}{2}}+{\vert\vert \nabla \hat{u} \vert\vert}_2^{\frac{1}{2}}\right)\\
&+C{\vert\vert \partial_z\hat{b} \vert\vert}_2
{\vert\vert \nabla b \vert\vert}_2^{\frac{1}{2}}\left({\vert\vert  \nabla b \vert\vert}_2^{\frac{1}{2}}+{\vert\vert \Delta b \vert\vert}_2^{\frac{1}{2}}\right)
\left({\vert\vert \hat{u} \vert\vert}_2+{\vert\vert  \hat{u} \vert\vert}_2^{\frac{1}{2}}{\vert\vert \nabla \hat{u} \vert\vert}_2^{\frac{1}{2}}\right)\\
&+C{\vert\vert \nabla\hat{b} \vert\vert}_2
{\vert\vert \partial_z b \vert\vert}_2^{\frac{1}{2}}\left({\vert\vert  \partial_z b \vert\vert}_2^{\frac{1}{2}}+{\vert\vert \nabla\partial_z b \vert\vert}_2^{\frac{1}{2}}\right)
\left({\vert\vert \hat{u} \vert\vert}_2+{\vert\vert  \hat{u} \vert\vert}_2^{\frac{1}{2}}{\vert\vert \nabla \hat{u} \vert\vert}_2^{\frac{1}{2}}\right)\\
=&:I_{21}+I_{22}+I_{23}.
\end{aligned}
$$
Using the Young inequality, we infer that
\begin{align*}
I_{21} & \leq Ck_0\left({\vert\vert \hat{b} \vert\vert}_2{\vert\vert \hat{u} \vert\vert}_2+{\vert\vert \hat{b} \vert\vert}_2{\vert\vert \hat{u} \vert\vert}_2^\frac{1}{2}{\vert\vert \nabla\hat{u} \vert\vert}_2^\frac{1}{2}+
{\vert\vert \hat{b} \vert\vert}_2^\frac{1}{2}{\vert\vert \hat{u} \vert\vert}_2{\vert\vert \nabla\hat{b} \vert\vert}_2^\frac{1}{2}
+{\vert\vert \hat{b} \vert\vert}_2^\frac{1}{2}{\vert\vert \nabla\hat{b} \vert\vert}_2^\frac{1}{2}
{\vert\vert \hat{u} \vert\vert}_2^\frac{1}{2}{\vert\vert \nabla\hat{u} \vert\vert}_2^\frac{1}{2}\right)\\
& \leq \left(Ck_0+\frac{2C^2k_0^2}{\mu}\right){\vert\vert \hat{u} \vert\vert}_2^2
+\left(Ck_0+\frac{2C^2k_0^2}{\mu}\right){\vert\vert \hat{b} \vert\vert}_2^2
+\frac{\mu}{8}{\vert\vert \nabla\hat{u} \vert\vert}_2^2+\frac{\mu}{8}{\vert\vert \nabla\hat{b} \vert\vert}_2^2,\\
I_{22} & \leq Ck_0\left({\vert\vert \partial_z\hat{b} \vert\vert}_2{\vert\vert \hat{u} \vert\vert}_2
+{\vert\vert \partial_z\hat{b} \vert\vert}_2{\vert\vert \hat{u} \vert\vert}_2^\frac{1}{2}{\vert\vert \nabla\hat{u} \vert\vert}_2^\frac{1}{2}\right)\\
& \leq\left(\frac{2C^2k_0^2}{\nu}+\frac{16C^4k_0^4}{\mu\nu^2}\right){\vert\vert \hat{u} \vert\vert}_2^2
+\frac{\nu}{4}{\vert\vert \partial_z\hat{b} \vert\vert}_2^2
+\frac{\mu}{16}{\vert\vert \nabla\hat{u} \vert\vert}_2^2,\\
I_{23} & \leq Ck_0\left({\vert\vert \nabla\hat{b} \vert\vert}_2{\vert\vert \hat{u} \vert\vert}_2
+{\vert\vert \nabla\hat{b} \vert\vert}_2{\vert\vert \hat{u} \vert\vert}_2^\frac{1}{2}{\vert\vert \nabla\hat{u} \vert\vert}_2^\frac{1}{2}\right)\\
& \leq\left(\frac{4C^2k_0^2}{\mu}+\frac{64C^4k_0^4}{\mu^3}\right){\vert\vert \hat{u} \vert\vert}_2^2
+\frac{\mu}{8}{\vert\vert \nabla\hat{b} \vert\vert}_2^2
+\frac{\mu}{16}{\vert\vert \nabla\hat{u} \vert\vert}_2^2.
\end{align*}
Thus
\begin{align*}
I_{2}\leq& \left(Ck_0+\frac{6C^2k_0^2}{\mu}+\frac{2C^2k_0^2}{\nu}+\frac{16C^4k_0^4}{\mu\nu^2}
+\frac{64C^4k_0^4}{\mu^3}\right){\vert\vert \hat{u} \vert\vert}_2^2
+\left(Ck_0+\frac{2C^2k_0^2}{\mu}\right){\vert\vert \hat{b} \vert\vert}_2^2\\
&+\frac{\nu}{4}{\vert\vert \partial_z\hat{b} \vert\vert}_2^2
+\frac{\mu}{4}{\vert\vert \nabla\hat{u} \vert\vert}_2^2
+\frac{\mu}{4}{\vert\vert \nabla\hat{b} \vert\vert}_2^2\\
=&: \Re_2{\vert\vert \hat{u} \vert\vert}_2^2
+\Re_3{\vert\vert \hat{b} \vert\vert}_2^2
+\frac{\nu}{4}{\vert\vert \partial_z\hat{b} \vert\vert}_2^2
+\frac{\mu}{4}{\vert\vert \nabla\hat{u} \vert\vert}_2^2
+\frac{\mu}{4}{\vert\vert \nabla\hat{b} \vert\vert}_2^2.
\end{align*}
Using Lemma \ref{lem2.1} and the Young inequality, we infer that
$$
\begin{aligned}
I_{3} &= \int_\Omega {\left[(\hat{u}\cdot\nabla)b+\hat{u}_3\partial_z b\right] \hat{b}} \,{\rm d}\Omega\\
\leq& \int_M {\left(\int_{-1}^1 {(\vert \hat{u} \vert+\vert \partial_z \hat{u} \vert)} \,{\rm d}z\right)
\left(\int_{-1}^1 {\vert \nabla b \vert \vert \hat{b} \vert} \,{\rm d}z\right)} \,{\rm d}M\\
&+ \int_M {\left(\int_{-1}^1 {\nabla \hat{u}} \,{\rm d}z\right)\left(\int_{-1}^1 {\vert \partial_z b \vert \vert \hat{b} \vert} \,{\rm d}z\right)} \,{\rm d}M\\
\leq& C{\vert\vert \hat{u} \vert\vert}_2^{\frac{1}{2}}\left({\vert\vert  \hat{u} \vert\vert}_2^{\frac{1}{2}}+{\vert\vert \nabla \hat{u} \vert\vert}_2^{\frac{1}{2}}\right)
{\vert\vert \nabla b \vert\vert}_2
{\vert\vert \hat{b} \vert\vert}_2^{\frac{1}{2}}\left({\vert\vert \hat{b} \vert\vert}_2^{\frac{1}{2}}+{\vert\vert \nabla \hat{b} \vert\vert}_2^{\frac{1}{2}}\right)\\
&+C{\vert\vert \partial_z\hat{u} \vert\vert}_2
{\vert\vert \nabla b \vert\vert}_2^{\frac{1}{2}}\left({\vert\vert  \nabla b \vert\vert}_2^{\frac{1}{2}}+{\vert\vert \Delta b \vert\vert}_2^{\frac{1}{2}}\right)
\left({\vert\vert \hat{b} \vert\vert}_2+{\vert\vert  \hat{b} \vert\vert}_2^{\frac{1}{2}}{\vert\vert \nabla \hat{b} \vert\vert}_2^{\frac{1}{2}}\right)\\
&+C{\vert\vert \nabla\hat{u} \vert\vert}_2
{\vert\vert \partial_z b \vert\vert}_2^{\frac{1}{2}}\left({\vert\vert  \partial_z b \vert\vert}_2^{\frac{1}{2}}+{\vert\vert \nabla\partial_z b \vert\vert}_2^{\frac{1}{2}}\right)
\left({\vert\vert \hat{b} \vert\vert}_2+{\vert\vert  \hat{b} \vert\vert}_2^{\frac{1}{2}}{\vert\vert \nabla \hat{b} \vert\vert}_2^{\frac{1}{2}}\right)\\
=&:I_{31}+I_{32}+I_{33}.
\end{aligned}
$$
Using the Young inequality, we infer that
\begin{align*}
I_{31} & \leq Ck_0\left({\vert\vert \hat{b} \vert\vert}_2{\vert\vert \hat{u} \vert\vert}_2+{\vert\vert \hat{b} \vert\vert}_2{\vert\vert \hat{u} \vert\vert}_2^\frac{1}{2}{\vert\vert \nabla\hat{u} \vert\vert}_2^\frac{1}{2}+
{\vert\vert \hat{b} \vert\vert}_2^\frac{1}{2}{\vert\vert \hat{u} \vert\vert}_2{\vert\vert \nabla\hat{b} \vert\vert}_2^\frac{1}{2}
+{\vert\vert \hat{b} \vert\vert}_2^\frac{1}{2}{\vert\vert \nabla\hat{b} \vert\vert}_2^\frac{1}{2}
{\vert\vert \hat{u} \vert\vert}_2^\frac{1}{2}{\vert\vert \nabla\hat{u} \vert\vert}_2^\frac{1}{2}\right)\\
& \leq \left(Ck_0+\frac{2C^2k_0^2}{\mu}\right){\vert\vert \hat{u} \vert\vert}_2^2
+\left(Ck_0+\frac{2C^2k_0^2}{\mu}\right){\vert\vert \hat{b} \vert\vert}_2^2
+\frac{\mu}{8}{\vert\vert \nabla\hat{u} \vert\vert}_2^2+\frac{\mu}{8}{\vert\vert \nabla\hat{b} \vert\vert}_2^2,\\
I_{32} & \leq Ck_0\left({\vert\vert \partial_z\hat{u} \vert\vert}_2{\vert\vert \hat{b} \vert\vert}_2
+{\vert\vert \partial_z\hat{u} \vert\vert}_2{\vert\vert \hat{b} \vert\vert}_2^\frac{1}{2}{\vert\vert \nabla\hat{b} \vert\vert}_2^\frac{1}{2}\right)\\
& \leq\left(\frac{2C^2k_0^2}{\nu}+\frac{16C^4k_0^4}{\mu\nu^2}\right){\vert\vert \hat{b} \vert\vert}_2^2
+\frac{\nu}{4}{\vert\vert \partial_z\hat{u} \vert\vert}_2^2
+\frac{\mu}{16}{\vert\vert \nabla\hat{b} \vert\vert}_2^2,\\
I_{33} & \leq Ck_0\left({\vert\vert \nabla\hat{u} \vert\vert}_2{\vert\vert \hat{b} \vert\vert}_2
+{\vert\vert \nabla\hat{u} \vert\vert}_2{\vert\vert \hat{b} \vert\vert}_2^\frac{1}{2}{\vert\vert \nabla\hat{b} \vert\vert}_2^\frac{1}{2}\right)\\
& \leq\left(\frac{4C^2k_0^2}{\mu}+\frac{64C^4k_0^4}{\mu^3}\right){\vert\vert \hat{b} \vert\vert}_2^2
+\frac{\mu}{8}{\vert\vert \nabla\hat{u} \vert\vert}_2^2
+\frac{\mu}{16}{\vert\vert \nabla\hat{b} \vert\vert}_2^2.
\end{align*}
Thus
\begin{align*}
I_{3}\leq& \left(Ck_0+\frac{6C^2k_0^2}{\mu}+\frac{2C^2k_0^2}{\nu}+\frac{16C^4k_0^4}{\mu\nu^2}
+\frac{64C^4k_0^4}{\mu^3}\right){\vert\vert \hat{b} \vert\vert}_2^2
+\left(Ck_0+\frac{2C^2k_0^2}{\mu}\right){\vert\vert \hat{u} \vert\vert}_2^2\\
&+\frac{\nu}{4}{\vert\vert \partial_z\hat{u} \vert\vert}_2^2
+\frac{\mu}{4}{\vert\vert \nabla\hat{u} \vert\vert}_2^2
+\frac{\mu}{4}{\vert\vert \nabla\hat{b} \vert\vert}_2^2\\
=&: \Re_2{\vert\vert \hat{b} \vert\vert}_2^2
+\Re_3{\vert\vert \hat{u} \vert\vert}_2^2
+\frac{\nu}{4}{\vert\vert \partial_z\hat{u} \vert\vert}_2^2
+\frac{\mu}{4}{\vert\vert \nabla\hat{u} \vert\vert}_2^2
+\frac{\mu}{4}{\vert\vert \nabla\hat{b} \vert\vert}_2^2.
\end{align*}
Using Lemma \ref{lem2.1} and the Young inequality, we infer that
$$
\begin{aligned}
I_{4} =& \int_\Omega {\left[(\hat{b}\cdot\nabla)u+\hat{b}_3\partial_zu\right] \hat{b}} \,{\rm d}\Omega\\
\leq& \int_M {\left(\int_{-1}^1 {(\vert \hat{b} \vert+\vert \partial_z \hat{b} \vert)} \,{\rm d}z\right)
\left(\int_{-1}^1 {\vert \nabla u \vert \vert \hat{b} \vert} \,{\rm d}z\right)} \,{\rm d}M\\
&+ \int_M {\left(\int_{-1}^1 {\nabla \hat{b}} \,{\rm d}z\right)\left(\int_{-1}^1 {\vert \partial_z u \vert \vert \hat{b} \vert} \,{\rm d}z\right)} \,{\rm d}M\\
\leq& C{\vert\vert \hat{b} \vert\vert}_2^{\frac{1}{2}}\left({\vert\vert  \hat{b} \vert\vert}_2^{\frac{1}{2}}+{\vert\vert \nabla \hat{b} \vert\vert}_2^{\frac{1}{2}}\right)
{\vert\vert \nabla u \vert\vert}_2
{\vert\vert \hat{b} \vert\vert}_2^{\frac{1}{2}}\left({\vert\vert \hat{b} \vert\vert}_2^{\frac{1}{2}}+{\vert\vert \nabla \hat{b} \vert\vert}_2^{\frac{1}{2}}\right)\\
&+C{\vert\vert \partial_z\hat{b} \vert\vert}_2
{\vert\vert \nabla u \vert\vert}_2^{\frac{1}{2}}\left({\vert\vert  \nabla u \vert\vert}_2^{\frac{1}{2}}+{\vert\vert \Delta u \vert\vert}_2^{\frac{1}{2}}\right)
\left({\vert\vert \hat{b} \vert\vert}_2+{\vert\vert  \hat{b} \vert\vert}_2^{\frac{1}{2}}{\vert\vert \nabla \hat{b} \vert\vert}_2^{\frac{1}{2}}\right)\\
&+C{\vert\vert \nabla\hat{b} \vert\vert}_2
{\vert\vert \partial_z u \vert\vert}_2^{\frac{1}{2}}\left({\vert\vert  \partial_z u \vert\vert}_2^{\frac{1}{2}}+{\vert\vert \nabla\partial_z u \vert\vert}_2^{\frac{1}{2}}\right)
\left({\vert\vert \hat{b} \vert\vert}_2+{\vert\vert  \hat{b} \vert\vert}_2^{\frac{1}{2}}{\vert\vert \nabla \hat{b} \vert\vert}_2^{\frac{1}{2}}\right)\\
=&:I_{41}+I_{42}+I_{43}.
\end{aligned}
$$
Using the Young inequality, we infer that
\begin{align*}
I_{41} & \leq Ck_0{\vert\vert \hat{b} \vert\vert}_2\left({\vert\vert \hat{b} \vert\vert}_2+{\vert\vert \nabla\hat{b} \vert\vert}_2+{\vert\vert \hat{b} \vert\vert}_2^{\frac{1}{2}}{\vert\vert \nabla\hat{b} \vert\vert}_2^{\frac{1}{2}}\right)\\
& \leq Ck_0{\vert\vert \hat{b} \vert\vert}_2^2+Ck_0{\vert\vert \hat{b} \vert\vert}_2{\vert\vert \nabla\hat{b} \vert\vert}_2+2Ck_0{\vert\vert \hat{b} \vert\vert}_2^{\frac{1}{2}}{\vert\vert \nabla\hat{b} \vert\vert}_2^{\frac{1}{2}}\\
& \leq \left[Ck_0+\frac{4C^2k_0^2}{\mu}+\frac{3}{4}(2Ck_0)^\frac{4}{3}\left(\frac{4}{\mu}\right)^\frac{1}{3}\right]
{\vert\vert \hat{b} \vert\vert}_2^2+\frac{\mu}{8}{\vert\vert \nabla\hat{b} \vert\vert}_2^2,\\
I_{42} & \leq Ck_0{\vert\vert \partial_z\hat{b} \vert\vert}_2\left({\vert\vert \hat{u} \vert\vert}_2
+{\vert\vert  \hat{b} \vert\vert}_2^{\frac{1}{2}}{\vert\vert \nabla \hat{b} \vert\vert}_2^{\frac{1}{2}}\right)\\
& \leq \frac{\nu}{4}{\vert\vert \partial_z\hat{b} \vert\vert}_2^2
+\frac{2C^2k_0^2}{\nu}\left({\vert\vert \hat{b} \vert\vert}_2
+{\vert\vert  \hat{b} \vert\vert}_2{\vert\vert \nabla \hat{b} \vert\vert}_2\right)\\
& \leq \left(\frac{2C^2k_0^2}{\nu}+\frac{16C^4k_0^4}{\mu\nu^2}\right){\vert\vert \hat{b} \vert\vert}_2^2
+\frac{\nu}{4}{\vert\vert \partial_z\hat{b} \vert\vert}_2^2+\frac{\mu}{16}{\vert\vert \nabla\hat{b} \vert\vert}_2^2,\\
I_{43} & \leq Ck_0{\vert\vert \nabla \hat{b} \vert\vert}_2{\vert\vert \hat{b} \vert\vert}_2
+Ck_0{\vert\vert \nabla \hat{b} \vert\vert}_2^\frac{3}{2}{\vert\vert \hat{b} \vert\vert}_2^\frac{1}{2}\\
& \leq \frac{\mu}{8}{\vert\vert \nabla \hat{b} \vert\vert}_2^2+\left(\frac{4C^2k_0^2}{\mu}
+\frac{432C^4k_0^4}{\mu^3}\right){\vert\vert \hat{b} \vert\vert}_2^2.
\end{align*}
Thus
\begin{align*}
I_{4}\leq& \frac{5\mu}{16}{\vert\vert \nabla \hat{b} \vert\vert}_2^2
+\frac{\nu}{4}{\vert\vert \partial_z\hat{b} \vert\vert}_2^2\\
&+\left[Ck_0+\frac{8C^2k_0^2}{\mu}+\frac{3}{4}(2Ck_0)^\frac{4}{3}\left(\frac{4}{\mu}\right)^\frac{1}{3}
+\frac{2C^2k_0^2}{\nu}+\frac{16C^4k_0^4}{\mu\nu^2}
+\frac{432C^4k_0^4}{\mu^3}\right]{\vert\vert \hat{b} \vert\vert}_2^2\\
=&: \frac{5\mu}{16}{\vert\vert \nabla \hat{b} \vert\vert}_2^2
+\frac{\nu}{4}{\vert\vert \partial_z\hat{b} \vert\vert}_2^2+\Re_1{\vert\vert \hat{b} \vert\vert}_2^2.
\end{align*}
We can also get
\begin{align*}
J_{1}&=\int_\Omega {\nabla \hat{p}\cdot \hat{u}} \,{\rm d}\Omega=0,\\
J_{2}&=-\beta_u\int_\Omega {I_h(\hat{u}) \hat{u}} \,{\rm d}\Omega\\
&=-\beta_u\int_\Omega {\left(I_h(\hat{u})-\hat{u}\right) \hat{u}} \,{\rm d}\Omega-\beta_u\int_\Omega {\hat{u} \hat{u}} \,{\rm d}\Omega\\
&\leq \beta_u {\vert\vert I_h(\hat{u})-\hat{u} \vert\vert}_2{\vert\vert \hat{u} \vert\vert}_2-\beta_u {\vert\vert \hat{u} \vert\vert}_2^2\\
&\leq C_h^1h\beta_u \left({\vert\vert \nabla\hat{u} \vert\vert}_2+{\vert\vert \partial_z\hat{u} \vert\vert}_2\right){\vert\vert \hat{u} \vert\vert}_2-\beta_u {\vert\vert \hat{u} \vert\vert}_2^2\\
&\leq \left(\frac{4{(C_h^1)}^2h^2\beta_u^2}{\mu}+\frac{{(C_h^1)}^2h^2\beta_u^2}{\nu}-\beta_u\right){\vert\vert \hat{u} \vert\vert}_2^2
+\frac{\mu}{16}{\vert\vert \nabla\hat{u} \vert\vert}_2^2+\frac{\nu}{4}{\vert\vert \partial_z\hat{u} \vert\vert}_2^2,\\
J_{3}&=-\beta_b\int_\Omega {I_h(\hat{b}) \hat{b}} \,{\rm d}\Omega\\
&\leq \left(\frac{4{(C_h^1)}^2h^2\beta_b^2}{\mu}+\frac{{(C_h^1)}^2h^2\beta_b^2}{\nu}-\beta_b\right){\vert\vert \hat{b} \vert\vert}_2^2
+\frac{\mu}{16}{\vert\vert \nabla\hat{b} \vert\vert}_2^2+\frac{\nu}{4}{\vert\vert \partial_z\hat{b} \vert\vert}_2^2.
\end{align*}
Thus
\begin{align*}
&\frac{1}{2}\frac{d}{dt}\left({\vert\vert \hat{u} \vert\vert}_2^2+{\vert\vert \hat{b} \vert\vert}_2^2\right)
+\mu{\vert\vert \nabla{\hat{u}} \vert\vert}_2^2+\nu{\vert\vert \partial_z{\hat{u}} \vert\vert}_2^2
+\kappa{\vert\vert \nabla{\hat{b}} \vert\vert}_2^2+\sigma{\vert\vert \partial_z{\hat{b}} \vert\vert}_2^2\\
%=&J_{1}+J_{2}+J_{3}+I_{1}+I_{2}+I_{3}+I_{4}\\
\leq& (\Re_1+\Re_2+\Re3)\left({\vert\vert \hat{u} \vert\vert}_2^2+{\vert\vert \hat{b} \vert\vert}_2^2\right)+\frac{3\nu}{4}\left({\vert\vert \partial_z\hat{u} \vert\vert}_2^2+{\vert\vert \partial_z\hat{b} \vert\vert}_2^2\right)+\frac{7\mu}{8}\left({\vert\vert \nabla\hat{u} \vert\vert}_2^2+{\vert\vert \nabla\hat{b} \vert\vert}_2^2\right)\\
&+\left(\frac{4{(C_h^1)}^2h^2\beta_u^2}{\mu}+\frac{{(C_h^1)}^2h^2\beta_u^2}{\nu}-\beta_u\right){\vert\vert \hat{u} \vert\vert}_2^2
+\left(\frac{4{(C_h^1)}^2h^2\beta_b^2}{\mu}+\frac{{(C_h^1)}^2h^2\beta_b^2}{\nu}-\beta_b\right){\vert\vert \hat{b} \vert\vert}_2^2,
\end{align*}
where
\begin{align}
&\Re_1=Ck_0+\frac{8C^2k_0^2}{\mu}+\frac{3}{4}(2Ck_0)^\frac{4}{3}\left(\frac{4}{\mu}\right)^\frac{1}{3}+
\frac{2C^2k_0^2}{\nu}+\frac{16C^4k_0^4}{\mu\nu^2}+\frac{432C^4k_0^4}{\mu^3},\label{r1}\\
&\Re_2=Ck_0+\frac{6C^2k_0^2}{\mu}+\frac{2C^2k_0^2}{\nu}+\frac{16C^4k_0^4}{\mu\nu^2}
+\frac{64C^4k_0^4}{\mu^3},\label{r2}\\
&\Re_3=Ck_0+\frac{2C^2k_0^2}{\mu}.\label{r3}
\end{align}
So if $\beta_u,\beta_b \; {\rm{and}}\; h$ satisfy
$$
\beta_u \geq 2(\Re_1+\Re_2+\Re_3),\;
\beta_b \geq 2(\Re_1+\Re_2+\Re_3),
$$
and
$$
0 < h \leq \min\left\{\frac{1}{\beta_u C_h^1}\sqrt{\frac{\mu\nu}{\mu+4\nu}},\frac{1}{\beta_b C_h^1}\sqrt{\frac{\mu\nu}{\mu+4\nu}}\right\},
$$
then the Gronwall inequality implies that $(\tilde{u},\tilde{b})$ converges to $(u,b)$ exponentially in $L^2$.
\end{proof}

\section{Sensitivity analysis}

In this section, we consider the sensitivity analysis problem for the assimilation system \eqref{2.11}-\eqref{2.14}, by proving that a sequence of solutions of difference quotients equations converges to a unique solution of the formal sensitivity equations. We only analyze the horizontal viscosity and magnetic diffusivity, and the vertical direction is similar. To simplify the notation, we set $\kappa=\mu,\sigma=\nu$. We first present the sensitivity equations and the difference quotients equations, respectively.

By taking the derivative of the assimilation system \eqref{2.11}-\eqref{2.14} with respect to $\mu$, we can get the sensitivity equations with respect to the horizontal direction:
\begin{align}
&\partial_t{\check{u}}+B(\check{u},\tilde{u})+B(\tilde{u},\check{u})+\nabla \check{p}
-B(\check{b},\tilde{b})-B(\tilde{b},\check{b})-\Delta \tilde{u}-\mu\Delta\check{u}-\nu{\partial_z^2}\check{u}=-\beta_u {I_h(\check{u})},\label{4.1}\\
&\partial_z{\check{p}}=0,\\
&\partial_t{\check{b}}+B(\check{u},\tilde{b})+B(\tilde{u},\check{b})
-B(\tilde{b},\check{u})-B(\check{b},\tilde{u})-\Delta \tilde{b}-\mu\Delta \check{b}-\nu{\partial_z^2}\check{b}=-\beta_b {I_h(\check{b})},\\
&\nabla\cdot{\check{u}}+\partial_z{\check{u}}_3=0,\; \nabla\cdot{\check{b}}+\partial_z{\check{b}}_3=0,\label{4.4}
\end{align}
where
$\check{u}=\partial_\mu{\tilde{u}}, \check{b}=\partial_\mu{\tilde{b}}, \check{p}=\partial_\mu{\tilde{p}},
\check{u}_3=\partial_\mu{\tilde{u}_3},
\check{b}_3=\partial_\mu{\tilde{b}_3}.
$

The difference quotients equations is obtained as the following by taking the solutions $(\tilde{u}_1,\tilde{b}_1)$ and $(\tilde{u}_2,\tilde{b}_2)$ of the assimilation system $\mu_1$ and $\mu_2$ respectively:
\begin{align}
&\partial_t{\bar{u}}+B(\bar{u},\tilde{u}_1)+B(\tilde{u}_2,\bar{u})+\nabla \bar{p}-B(\bar{b},\tilde{b}_1)-B(\tilde{b}_2,\bar{b})-\Delta\tilde{u}_1-\mu_2\Delta\bar{u}-\nu{\partial_z^2}\bar{u}=-\beta_u {I_h(\bar{u})},\label{4.5}\\
&\partial_z {\bar{p}}=0, \\
&\partial_t {\bar{b}}+B(\bar{u},\tilde{b}_1)+B(\tilde{b}_2,\bar{u})
-B(\bar{b},\tilde{u}_1)-B(\tilde{b}_2,\bar{u})-\Delta\tilde{b}_1-\mu_2\Delta\bar{b}-\nu{\partial_z^2}\bar{b}=-\beta_b {I_h(\bar{b})},\\
&\nabla \cdot{\bar{u}}+\partial_z{\bar{u}}_3=0,\; \nabla\cdot{\bar{b}}+\partial_z{\bar{b}}_3=0,\label{4.8}
\end{align}
where
$\bar{u}=\frac{\tilde{u}_1-\tilde{u}_2}{\mu_1-\mu_2}, \bar{b}=\frac{\tilde{b}_1-\tilde{b}_2}{\mu_1-\mu_2}, \bar{p}=\frac{\tilde{p}_1-\tilde{p}_2}{\mu_1-\mu_2}$.

Next, we consider the well-posedness of the sensitivity equations and the difference quotients equations, respectively.
\begin{proposition}\label{prop.dqs}
Suppose that $(\tilde{u}_1,\tilde{b}_1)$ and $(\tilde{u}_2,\tilde{b}_2)$ are two strong solutions of the assimilation system \eqref{2.11}-\eqref{2.14} with viscosities $\mu_1$ and $\mu_2$, respectively. Then, the difference quotients system \eqref{4.5}-\eqref{4.8} has a unique solution $(\bar{u},\bar{b})$.
\end{proposition}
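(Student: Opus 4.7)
The plan is to split the argument into existence and uniqueness. For existence, the natural candidate is $\bar{u}:=(\tilde{u}_1-\tilde{u}_2)/(\mu_1-\mu_2)$, $\bar{b}:=(\tilde{b}_1-\tilde{b}_2)/(\mu_1-\mu_2)$, with $\bar{p}$ defined analogously. Writing down the assimilation equations \eqref{2.11}-\eqref{2.14} for each $(\tilde{u}_i,\tilde{b}_i,\tilde{p}_i,\mu_i)$, subtracting, and dividing by $\mu_1-\mu_2$, I will use the bilinear identity $B(\tilde{u}_1,\tilde{u}_1)-B(\tilde{u}_2,\tilde{u}_2)=(\mu_1-\mu_2)\bigl[B(\bar{u},\tilde{u}_1)+B(\tilde{u}_2,\bar{u})\bigr]$ (and its analogues for the $\tilde{b}$ terms) and the algebraic decomposition $\mu_1\Delta\tilde{u}_1-\mu_2\Delta\tilde{u}_2=(\mu_1-\mu_2)\bigl[\mu_2\Delta\bar{u}+\Delta\tilde{u}_1\bigr]$ to verify that $(\bar{u},\bar{b},\bar{p})$ solves \eqref{4.5}-\eqref{4.8}. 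Since the previous propositions give $\tilde{u}_i,\tilde{b}_i\in C(0,\infty;H^2)\cap L^2(0,\infty;H^3)$ with $\partial_t\tilde{u}_i,\partial_t\tilde{b}_i\in L^2(0,\infty;H^1)$, the pair $(\bar{u},\bar{b})$ inherits the same class and is a legitimate strong solution.

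For uniqueness, suppose $(\bar{u}^{(1)},\bar{b}^{(1)})$ and $(\bar{u}^{(2)},\bar{b}^{(2)})$ are two such solutions, and set $w=\bar{u}^{(1)}-\bar{u}^{(2)}$, $v=\bar{b}^{(1)}-\bar{b}^{(2)}$ with pressure difference $q$. Subtracting the two copies of \eqref{4.5}-\eqref{4.8} cancels the inhomogeneity $-\Delta\tilde{u}_1$, and $(w,v)$ satisfies the homogeneous linear system
\begin{align*}
&\partial_t w+B(w,\tilde{u}_1)+B(\tilde{u}_2,w)+\nabla q-B(v,\tilde{b}_1)-B(\tilde{b}_2,v)-\mu_2\Delta w-\nu\partial_z^2 w=-\beta_u I_h(w),\\
&\partial_t v+B(w,\tilde{b}_1)+B(\tilde{u}_2,v)-B(v,\tilde{u}_1)-B(\tilde{b}_2,w)-\mu_2\Delta v-\nu\partial_z^2 v=-\beta_b I_h(v),
\end{align*}
with $\nabla\cdot w+\partial_z w_3=0$, $\nabla\cdot v+\partial_z v_3=0$, and $w(0)=v(0)=0$. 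Testing against $(w,v)$ in $L^2$, the ``diagonal'' transport terms vanish by the property $(B(\tilde{u}_2,w),w)=(B(\tilde{u}_2,v),v)=0$ recalled in Section 2. The remaining bilinear terms, in which a derivative falls on the reference fields $\tilde{u}_1,\tilde{b}_1$, are estimated by Lemma \ref{lem2.3} in the form
\[
|(B(w,\tilde{u}_1),w)|\leq C\|\nabla w\|_2^{1/2}\|\Delta w\|_2^{1/2}\|\nabla\tilde{u}_1\|_2^{1/2}\|\Delta\tilde{u}_1\|_2^{1/2}\|w\|_2,
\]
and similarly for the cross terms involving $(w,v)$ paired with $\tilde{b}_1,\tilde{b}_2$. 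Young's inequality absorbs a fraction of $\mu_2(\|\Delta w\|_2^2+\|\Delta v\|_2^2)$ into the dissipation, while the feedback terms $-\beta_u(I_h(w),w)$ and $-\beta_b(I_h(v),v)$ are handled as in Section 3 via \eqref{2.15}-\eqref{2.16}, yielding at worst a multiple of $\|w\|_2^2+\|v\|_2^2$ on the right. The resulting differential inequality
\[
\frac{d}{dt}(\|w\|_2^2+\|v\|_2^2)\leq \Phi(t)(\|w\|_2^2+\|v\|_2^2)
\]
has $\Phi\in L^1_{\mathrm{loc}}(0,\infty)$ because $\tilde{u}_i,\tilde{b}_i\in L^2(0,\infty;H^3)$, so Gronwall forces $w\equiv v\equiv 0$.

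The main obstacle is the control of the off-diagonal bilinear terms $B(w,\tilde{u}_1)$, $B(v,\tilde{b}_1)$, $B(w,\tilde{b}_1)$, $B(v,\tilde{u}_1)$, where derivatives land on the reference arguments: their integrability in time hinges precisely on the $L^2_t H^3_x$ estimates for $\tilde{u}_1,\tilde{b}_1$ established in Sections 3.4-3.5. A secondary care point is the vertical contributions $w_3,v_3$ arising from the $B(\cdot,\cdot)$ structure; using the representations \eqref{2.9}-\eqref{2.10} applied to the differences (so that $w_3,v_3$ are controlled by $\nabla w,\nabla v$) combined with Lemma \ref{lem2.1} guarantees that these contribute only terms that fit into the same Young-inequality absorption scheme.
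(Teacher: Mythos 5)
Your existence step is sound and in fact more explicit than the paper's (which merely asserts existence from the uniform bounds): verifying that the difference quotient of the two assimilation solutions solves \eqref{4.5}--\eqref{4.8} via the decompositions $B(\tilde{u}_1,\tilde{u}_1)-B(\tilde{u}_2,\tilde{u}_2)=(\mu_1-\mu_2)[B(\bar{u},\tilde{u}_1)+B(\tilde{u}_2,\bar{u})]$ and $\mu_1\Delta\tilde{u}_1-\mu_2\Delta\tilde{u}_2=(\mu_1-\mu_2)[\Delta\tilde{u}_1+\mu_2\Delta\bar{u}]$ is correct and settles existence cleanly.

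The uniqueness step, however, has a genuine gap. You test the homogeneous system against $(w,v)$ in $L^2$, which produces only the first-order dissipation $\mu_2\|\nabla w\|_2^2+\nu\|\partial_z w\|_2^2$ (and likewise for $v$). You then estimate the off-diagonal terms by Lemma \ref{lem2.3}, which yields factors of $\|\Delta_3 w\|_2^{1/2}$, i.e.\ $H^2$-norms of the unknown difference, and you claim Young's inequality ``absorbs a fraction of $\mu_2(\|\Delta w\|_2^2+\|\Delta v\|_2^2)$ into the dissipation.'' There is no $\|\Delta w\|_2^2$ term on the left-hand side of an $L^2$ energy identity, so nothing is available to absorb it; as written the differential inequality does not close. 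The paper avoids this by running the uniqueness estimate one derivative higher: it tests against $-\Delta\dot{u}-\partial_z^2\dot{u}$ and $-\Delta\dot{b}-\partial_z^2\dot{b}$, so the dissipation is $\tfrac{15}{16}\min\{\mu_2,\nu\}\|\dot{u}\|_{H^2}^2$ (modulo lower-order terms), which does absorb the $\|\dot{u}\|_{H^2}$ factors produced by Lemma \ref{lem2.3}, and Gronwall is then applied to $\|\dot{u}\|_{H^1}^2+\|\dot{b}\|_{H^1}^2$. Alternatively, your $L^2$-level scheme can be rescued by replacing Lemma \ref{lem2.3} with Lemma \ref{lem2.1}, exactly as in the proof of Theorem 3.1 for the terms $I_1,\dots,I_4$: there the trilinear forms $(B(w,\tilde{u}_1),w)$ are bounded using only $\|w\|_2$, $\|\nabla w\|_2$, $\|\partial_z w\|_2$ together with $H^2$-norms of the coefficients, which do fit the available dissipation. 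Either repair works, but the absorption step must be rewritten; the rest of your argument (vanishing of the diagonal terms, treatment of the feedback terms, local integrability of the Gronwall coefficient from the $L^2_tH^2_x$ bounds on $\tilde{u}_i,\tilde{b}_i$) is fine.
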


\begin{proof} It is estimated from uniform boundedness that the difference quotients equation has a strong $H^1$ solution. Next, we prove the unique.

Let  $(\bar{u}_1,\bar{b}_1)$ and $(\bar{u}_2,\bar{b}_2)$ be two solutions of the difference quotients system,
and $\dot{u}=\bar{u}_1-\bar{u}_2, \dot{b}=\bar{b}_1-\bar{b}_2$.
We have
\begin{align*}
&\partial_t{\dot{u}}+B(\dot{u},\tilde{u}_1)+B(\tilde{u}_2,\dot{u})+\nabla \dot{p}-B(\dot{b},\tilde{b}_1)-B(\tilde{b}_2,\dot{b})-\mu_2\Delta\dot{u}-\nu{\partial_z^2}\dot{u}=-\beta_u {I_h(\dot{u})},\\
&\partial_z{\bar{p}}=0,\\
&\partial_t{\dot{b}}+B(\dot{u},\tilde{b}_1)+B(\tilde{b}_2,\dot{u})-B(\dot{b},\tilde{u}_1)-B(\tilde{b}_2,\dot{u})-\mu_2\Delta\dot{b}-\nu{\partial_z^2}\dot{b}=-\beta_b {I_h(\dot{b})},\\
&\nabla\cdot{\dot{u}}+\partial_z{\dot{u}}_3=0,\; \nabla\cdot{\dot{b}}+\partial_z{\dot{b}}_3=0.
\end{align*}
Taking the inner product of equation with $-\Delta\dot{u}-\partial_z^2 \dot{u}$ and $-\Delta\dot{b}-\partial_z^2 \dot{b}$ respectively, we can get
$$
\begin{aligned}
&\frac{1}{2}\frac{d}{dt}\left({\vert\vert \nabla\dot{u} \vert\vert}_2^2+{\vert\vert \partial_z\dot{u} \vert\vert}_2^2\right)
+\frac{1}{2}\frac{d}{dt}\left({\vert\vert \nabla\dot{b} \vert\vert}_2^2+{\vert\vert \partial_z\dot{b} \vert\vert}_2^2\right)\\
&+\int_\Omega {\left(\mu_2\Delta\dot{u}+\nu{\partial_z^2}\dot{u}\right)(\Delta\dot{u}+{\partial_z^2}\dot{u})} \,{\rm d}\Omega
+\int_\Omega {(\mu_2\Delta\dot{b}+\nu{\partial_z^2}\dot{b})(\Delta\dot{b}+{\partial_z^2}\dot{b})} \,{\rm d}\Omega\\
=&\int_\Omega {\nabla \dot{p}\cdot (\Delta\dot{u}+{\partial_z^2}\dot{u})} \,{\rm d}\Omega\\
&+\int_\Omega {\left(B(\dot{u},\tilde{u}_1)+B(\tilde{u}_2,\dot{u})\right) (\Delta\dot{u}+{\partial_z^2}\dot{u})} \,{\rm d}\Omega
-\int_\Omega {\left(B(\dot{b},\tilde{b}_1)+B(\tilde{b}_2,\dot{b})\right) (\Delta\dot{u}+{\partial_z^2}\dot{u})} \,{\rm d}\Omega\\
&+\int_\Omega {\left(B(\dot{u},\tilde{b}_1)+B(\tilde{b}_2,\dot{u})\right) (\Delta\dot{b}+{\partial_z^2}\dot{b})} \,{\rm d}\Omega
-\int_\Omega {\left(B(\dot{b},\tilde{u}_1)+B(\tilde{b}_2,\dot{b})\right) (\Delta\dot{b}+{\partial_z^2}\dot{b})} \,{\rm d}\Omega\\
&+\beta_u\int_\Omega {I_h(\dot{u}) (\Delta\dot{u}+{\partial_z^2}\dot{u})} \,{\rm d}\Omega
+\beta_b\int_\Omega {I_h(\dot{b}) (\Delta\dot{b}+{\partial_z^2}\dot{b})} \,{\rm d}\Omega\\
=&:K_{1}+W_{1}+W_{2}+W_{3}+W_{4}+W_{5}+W_{6}.
\end{aligned}
$$
Using the H\"older and Young inequality, we infer that
$$
\begin{aligned}
\int_\Omega {\left(\mu_2\Delta\dot{u}+\nu{\partial_z^2}\dot{u}\right)(\Delta\dot{u}+{\partial_z^2}\dot{u})} \,{\rm d}\Omega &\geq \frac{15}{16}\min\{\mu_2, \nu\}{\vert\vert \dot{u} \vert\vert}_{H^2}^2-C{\vert\vert \dot{u} \vert\vert}_{H^1}^2,\\
\int_\Omega {(\mu_2\Delta\dot{b}+\nu{\partial_z^2}\dot{b})(\Delta\dot{b}+{\partial_z^2}\dot{b})} \,{\rm d}\Omega
&\geq \frac{15}{16}\min\{\mu_2, \sigma\}{\vert\vert \dot{b} \vert\vert}_{H^2}^2-C{\vert\vert \dot{b} \vert\vert}_{H^1}^2.\\
\end{aligned}
$$
Using Lemma \ref{lem2.3} and Young's inequality, we infer that
$$
\begin{aligned}
W_{1}&\leq C{\vert\vert \dot{u} \vert\vert}_{H^1}^{\frac{1}{2}}{\vert\vert \dot{u} \vert\vert}_{H^2}^{\frac{1}{2}}
{\vert\vert \tilde{u}_1 \vert\vert}_{H^1}^{\frac{1}{2}}{\vert\vert \tilde{u}_1 \vert\vert}_{H^2}^{\frac{1}{2}}
{\vert\vert \dot{u} \vert\vert}_{H^2}+C{\vert\vert \tilde{u}_2 \vert\vert}_{H^1}^{\frac{1}{2}}{\vert\vert \tilde{u}_2 \vert\vert}_{H^2}^{\frac{1}{2}}
{\vert\vert \dot{u} \vert\vert}_{H^1}^{\frac{1}{2}}{\vert\vert \dot{u} \vert\vert}_{H^2}^{\frac{1}{2}}
{\vert\vert \dot{u} \vert\vert}_{H^2}\\
&\leq C\left({\vert\vert \tilde{u}_1 \vert\vert}_{H^1}^2{\vert\vert \tilde{u}_1 \vert\vert}_{H^2}^2+{\vert\vert \tilde{u}_2 \vert\vert}_{H^1}^2{\vert\vert \tilde{u}_2 \vert\vert}_{H^2}^2\right)\left({\vert\vert \dot{u} \vert\vert}_{H^1}^2+{\vert\vert \dot{u} \vert\vert}_{H^2}^2\right),\\
W_{2}&\leq C{\vert\vert \dot{b} \vert\vert}_{H^1}^{\frac{1}{2}}{\vert\vert \dot{b} \vert\vert}_{H^2}^{\frac{1}{2}}
{\vert\vert \tilde{u}_1 \vert\vert}_{H^1}^{\frac{1}{2}}{\vert\vert \tilde{u}_1 \vert\vert}_{H^2}^{\frac{1}{2}}
{\vert\vert \dot{u} \vert\vert}_{H^2}+C{\vert\vert \tilde{u}_2 \vert\vert}_{H^1}^{\frac{1}{2}}{\vert\vert \tilde{u}_2 \vert\vert}_{H^2}^{\frac{1}{2}}
{\vert\vert \dot{b} \vert\vert}_{H^1}^{\frac{1}{2}}{\vert\vert \dot{b} \vert\vert}_{H^2}^{\frac{1}{2}}
{\vert\vert \dot{u} \vert\vert}_{H^2}\\
&\leq C\left({\vert\vert \tilde{u}_1 \vert\vert}_{H^1}^2{\vert\vert \tilde{u}_1 \vert\vert}_{H^2}^2+{\vert\vert \tilde{u}_2 \vert\vert}_{H^1}^2{\vert\vert \tilde{u}_2 \vert\vert}_{H^2}^2\right)\left({\vert\vert \dot{b} \vert\vert}_{H^1}^2+{\vert\vert \dot{b} \vert\vert}_{H^2}^2+{\vert\vert \dot{u} \vert\vert}_{H^2}^2\right),\\
W_{3}&\leq C{\vert\vert \dot{u} \vert\vert}_{H^1}^{\frac{1}{2}}{\vert\vert \dot{u} \vert\vert}_{H^2}^{\frac{1}{2}}
{\vert\vert \tilde{b}_1 \vert\vert}_{H^1}^{\frac{1}{2}}{\vert\vert \tilde{b}_1 \vert\vert}_{H^2}^{\frac{1}{2}}
{\vert\vert \dot{b} \vert\vert}_{H^2}+C{\vert\vert \tilde{b}_2 \vert\vert}_{H^1}^{\frac{1}{2}}{\vert\vert \tilde{b}_2 \vert\vert}_{H^2}^{\frac{1}{2}}
{\vert\vert \dot{b} \vert\vert}_{H^1}^{\frac{1}{2}}{\vert\vert \dot{b} \vert\vert}_{H^2}^{\frac{1}{2}}
{\vert\vert \dot{b} \vert\vert}_{H^2}\\
&\leq C\left({\vert\vert \tilde{b}_1 \vert\vert}_{H^1}^2{\vert\vert \tilde{b}_1 \vert\vert}_{H^2}^2+{\vert\vert \tilde{b}_2 \vert\vert}_{H^1}^2{\vert\vert \tilde{b}_2 \vert\vert}_{H^2}^2\right)\left({\vert\vert \dot{b} \vert\vert}_{H^1}^2+{\vert\vert \dot{b} \vert\vert}_{H^2}^2+{\vert\vert \dot{u} \vert\vert}_{H^1}^2+{\vert\vert \dot{u} \vert\vert}_{H^2}^2\right),\\
W_{4}&\leq C{\vert\vert \dot{b} \vert\vert}_{H^1}^{\frac{1}{2}}{\vert\vert \dot{b} \vert\vert}_{H^2}^{\frac{1}{2}}
{\vert\vert \tilde{u}_1 \vert\vert}_{H^1}^{\frac{1}{2}}{\vert\vert \tilde{u}_1 \vert\vert}_{H^2}^{\frac{1}{2}}
{\vert\vert \dot{b} \vert\vert}_{H^2}+C{\vert\vert \tilde{u}_2 \vert\vert}_{H^1}^{\frac{1}{2}}{\vert\vert \tilde{u}_2 \vert\vert}_{H^2}^{\frac{1}{2}}
{\vert\vert \dot{b} \vert\vert}_{H^1}^{\frac{1}{2}}{\vert\vert \dot{b} \vert\vert}_{H^2}^{\frac{1}{2}}
{\vert\vert \dot{b} \vert\vert}_{H^2}\\
&\leq C\left({\vert\vert \tilde{u}_1 \vert\vert}_{H^1}^2{\vert\vert \tilde{u}_1 \vert\vert}_{H^2}^2+{\vert\vert \tilde{u}_2 \vert\vert}_{H^1}^2{\vert\vert \tilde{u}_2 \vert\vert}_{H^2}^2\right)\left({\vert\vert \dot{b} \vert\vert}_{H^1}^2+{\vert\vert \dot{b} \vert\vert}_{H^2}^2\right),\\
\end{aligned}
$$
and
$$
\begin{aligned}
K_{1}&= 0,\\
W_{5}&\leq \beta_u{\vert\vert I_h(\dot{u}) \vert\vert}_2{\vert\vert \dot{u} \vert\vert}_{H^2}
\leq \frac{1}{2}\beta_u\left({\vert\vert I_h(\dot{u}) \vert\vert}_2^2+{\vert\vert \dot{u} \vert\vert}_{H^2}^2\right),\\
W_{6}&\leq \beta_b{\vert\vert I_h(\dot{b}) \vert\vert}_2{\vert\vert \dot{b} \vert\vert}_{H^2}
\leq \frac{1}{2}\beta_b\left({\vert\vert I_h(\dot{b}) \vert\vert}_2^2+{\vert\vert \dot{b} \vert\vert}_{H^2}^2\right).\\
\end{aligned}
$$
Combining all the above estimates, we obtain
$$
\begin{aligned}
&\frac{1}{2}\frac{d}{dt}\left({\vert\vert \dot{u} \vert\vert}_{H_1}^2+{\vert\vert \dot{b} \vert\vert}_{H_1}^2\right)
+ \frac{1}{2}\mu^*\left({\vert\vert \dot{u} \vert\vert}_{H_2}^2+{\vert\vert \dot{b} \vert\vert}_{H_2}^2\right)\\
\leq& C\left({\vert\vert \tilde{u}_1 \vert\vert}_{H^1}^2{\vert\vert \tilde{u}_1 \vert\vert}_{H^2}^2+{\vert\vert \tilde{u}_2 \vert\vert}_{H^1}^2{\vert\vert \tilde{u}_2 \vert\vert}_{H^2}^2
+{\vert\vert \tilde{b}_1 \vert\vert}_{H^1}^2{\vert\vert \tilde{b}_1 \vert\vert}_{H^2}^2+{\vert\vert \tilde{b}_2 \vert\vert}_{H^1}^2{\vert\vert \tilde{b}_2 \vert\vert}_{H^2}^2\right)
\left({\vert\vert \dot{u} \vert\vert}_{H_1}^2+{\vert\vert \dot{b} \vert\vert}_{H_1}^2\right),\\
\end{aligned}
$$
then by Gronwall's inequality, the difference quotients system has a unique solution.
\end{proof}

\begin{theorem}
Let $T>0, (\tilde{u}, \tilde{b})$ and $(\tilde{u}_n, \tilde{b}_n)$ be $H^1$ solutions of the assimilation system \eqref{2.11}-\eqref{2.14} in $\mu$ and $\mu_n$, and $(\bar{u}_n, \bar{b}_n)$  be $H^1$ strong solutions of the difference quotients equation \eqref{4.5}-\eqref{4.8} with viscosity $\mu_n$. If $\mu_n\rightarrow\mu,n\rightarrow\infty$, then there is a subsequence of $(\bar{u}_n, \bar{b}_n)$ in $L^2$ that converges to the unique strong solution $(\check{u},\check{b})$ of the sensitive system \eqref{4.1}-\eqref{4.4}.
\end{theorem}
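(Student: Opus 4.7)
The plan is to combine uniform (in $n$) a priori estimates on $(\bar u_n,\bar b_n)$ with an Aubin--Lions type compactness argument to extract a convergent subsequence, identify the limit as a strong solution of the sensitivity system \eqref{4.1}--\eqref{4.4}, and then invoke uniqueness for that system so that the limit is forced to coincide with $(\check u,\check b)$. The four main ingredients are: (i) uniform estimates for $(\bar u_n,\bar b_n)$; (ii) compactness; (iii) passage to the limit; (iv) uniqueness of $(\check u,\check b)$.

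First I would show that the difference quotients $(\bar u_n,\bar b_n)$ are uniformly bounded on $[0,T]$ independently of $n$. Testing \eqref{4.5}--\eqref{4.8} against $(\bar u_n,\bar b_n)$ and against $(-\Delta\bar u_n-\partial_z^2\bar u_n,-\Delta\bar b_n-\partial_z^2\bar b_n)$, treating the trilinear terms by Lemma \ref{lem2.3}, the feedback terms via \eqref{2.15}--\eqref{2.16} with Young, and absorbing the forcing $\Delta\tilde u_n$, $\Delta\tilde b_n$ using the uniform $H^2$--bounds from Section 3 (which are uniform in $\mu_n$ since $\mu_n\to\mu$), I expect to obtain
\[
\sup_{t\in[0,T]}\bigl(\|\bar u_n\|_{H^1}^2+\|\bar b_n\|_{H^1}^2\bigr)+\int_0^T\bigl(\|\bar u_n\|_{H^2}^2+\|\bar b_n\|_{H^2}^2\bigr)\,{\rm d}s\leq C(T),
\]
with $C(T)$ independent of $n$. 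Reading the equations then gives $(\partial_t\bar u_n,\partial_t\bar b_n)$ uniformly bounded in $L^2(0,T;L^2)$.

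Second, the Aubin--Lions compactness lemma yields a subsequence, still denoted $(\bar u_n,\bar b_n)$, and a limit $(\check u,\check b)$ such that $(\bar u_n,\bar b_n)\to(\check u,\check b)$ strongly in $L^2(0,T;H^1)$ (hence in $L^2(0,T;L^2)$, which is the conclusion of the theorem), weakly in $L^2(0,T;H^2)$, and weak-$\ast$ in $L^\infty(0,T;H^1)$. In parallel, I would establish that $\tilde u_n\to\tilde u$ and $\tilde b_n\to\tilde b$ strongly in $L^2(0,T;H^1)$ and weakly in $L^2(0,T;H^2)$: the difference $(\tilde u_n-\tilde u,\tilde b_n-\tilde b)$ satisfies an assimilation-type equation with perturbative source $(\mu_n-\mu)\Delta\tilde u$, $(\mu_n-\mu)\Delta\tilde b$, and the $H^2$--bounds from Section 3 make an energy estimate together with Gronwall deliver the convergence as $\mu_n\to\mu$.

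Third, I would pass to the limit in \eqref{4.5}--\eqref{4.8}. The bilinear terms $B(\bar u_n,\tilde u_n)$, $B(\tilde u_n,\bar u_n)$, and their magnetic analogues converge in the sense of distributions by combining the strong $L^2_tH^1$ convergences for one factor and weak $L^2_tH^2$ for the other, via Lemma \ref{lem2.3}. The viscous term $\mu_n\Delta\bar u_n$ converges weakly to $\mu\Delta\check u$ because $\mu_n\to\mu$ and $\bar u_n\rightharpoonup\check u$ in $L^2_tH^2$. The forcing $\Delta\tilde u_n\rightharpoonup\Delta\tilde u$ weakly in $L^2_tL^2$. The feedback $\beta_uI_h(\bar u_n)\to\beta_uI_h(\check u)$ by linearity and boundedness of $I_h$. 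Thus $(\check u,\check b)$ is a strong solution of \eqref{4.1}--\eqref{4.4}. Uniqueness for this linear system is obtained by testing the difference of two candidate solutions against their Laplacian plus $\partial_z^2$, applying Lemma \ref{lem2.3} exactly as in Proposition \ref{prop.dqs}, and closing by Gronwall; this forces the whole (sub)sequence to converge to $(\check u,\check b)$.

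The main obstacle I anticipate is controlling the forcing $\Delta\tilde u_n$ in the uniform estimates for $(\bar u_n,\bar b_n)$: this term cannot be made small via $\mu_n-\mu$, so I must rely on the uniform-in-$\mu$ $H^2$--bounds of the assimilation system from Section 3, and also ensure that those bounds are stable as the viscosity parameter varies in a neighbourhood of $\mu$. A companion difficulty is obtaining $\tilde u_n\to\tilde u$ in a topology strong enough that the nonlinear terms pass cleanly to the limit; here the crucial observation is that the difference $\tilde u_n-\tilde u$ satisfies an assimilation-type equation driven by $(\mu_n-\mu)\Delta\tilde u$, so the same energy machinery used throughout Section 3 applies and yields the required convergence.
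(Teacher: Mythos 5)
Your proposal is correct and follows the same overall architecture as the paper: uniform-in-$n$ bounds for $(\bar u_n,\bar b_n)$ in $L^\infty_tH^1\cap L^2_tH^2$ with $\partial_t(\bar u_n,\bar b_n)$ bounded in $L^2_tL^2$, Aubin--Lions compactness, identification of the limit, and uniqueness. Two points differ. First, for the convergence $(\tilde u_n,\tilde b_n)\to(\tilde u,\tilde b)$ the paper does \emph{not} use your direct stability estimate on the difference driven by the source $(\mu_n-\mu)\Delta\tilde u$; instead it proves uniform bounds via a decreasing function $f(\mu_n)\le f(\mu/2)$ for $\mu_n\in(\tfrac12\mu,\tfrac32\mu)$, extracts weakly convergent subsequences of $(\tilde u_n,\tilde b_n)$, their time derivatives, and the nonlinear terms, shows the limit solves the assimilation system, and invokes uniqueness of that system. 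Your route is more quantitative and delivers strong $L^2_tH^1$ convergence of $(\tilde u_n,\tilde b_n)$ directly (which is what one actually needs to pass to the limit in the products $B(\bar u_n,\tilde u_n)$, etc.), whereas the paper's compactness-plus-uniqueness route is softer but leaves the strong convergence of $\tilde u_n$ only implicit. Second, you are more complete than the paper at the final stage: the paper's written proof stops after obtaining $(\bar u_n,\bar b_n)\to(\check u,\check b)$ in $L^2(0,T;H^1)$ and $(\check u,\check b)\in C(0,T;H^1)$, and does not explicitly verify that the limit satisfies \eqref{4.1}--\eqref{4.4} nor that the sensitivity system has a unique solution (it relies implicitly on the analogous computation in Proposition \ref{prop.dqs}); your plan spells out both the limit passage term by term and the uniqueness argument by testing the difference of two candidates against $-\Delta-\partial_z^2$ of it, which is exactly the Proposition \ref{prop.dqs} machinery. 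Your identification of the main obstacle --- that the forcing $\Delta\tilde u_n$ cannot be made small and must be absorbed using $H^2$ bounds that are uniform for viscosities near $\mu$ --- matches precisely the role of the paper's $f(\mu_n)\le f(\mu/2)$ device.
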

\begin{proof}
Let $T>0$ be given. Let $n>N$ be large enough, which implies $\mu_n\in\left(\frac{1}{2}\mu,\frac{3}{2}\mu\right)$.

Considering these estimates for $H^1$ strong solutions $(\tilde{u}_n, \tilde{b}_n)$, we can easy to verify that
\begin{align*}
{\vert\vert (\tilde{u}_n, \tilde{b}_n) \vert\vert}_{H_1}^2&\leq f(\mu_n) \leq f\left(\frac{\mu}{2}\right),\\
\int_{0}^T {{\vert\vert (\tilde{u}_n, \tilde{b}_n) \vert\vert}_{H_2}^2} \,{\rm d}s&\leq \frac{f(\mu_n)}{\min \{\mu_n,\nu\}}\leq \frac{f(\frac{1}{2}\mu)}{\min \{\frac{1}{2}\mu,\nu\}},
\end{align*}
where $f(\mu_n)$ is a decreasing function respect to $\mu_n$.
So we can get that $\{(\tilde{u}_n, \tilde{b}_n)\}_{n=1}^{\infty}$ is uniformly bounded in $L^2(0,T;H^2)$ respect to $n$.
Then there exists a subsequence such that $(\tilde{u}_n, \tilde{b}_n)\rightarrow(\tilde{u}', \tilde{b}')$.
Similarly available $(\frac{d\tilde{u}_n}{dt}, \frac{d\tilde{b}_n}{dt})$ is also uniformly bounded in $L^2(0,T;L^2)$ respect to $n$.
Therefore, there exists a subsequence such that
\begin{align*}
&\left(\frac{d\tilde{u}_n}{dt}, \frac{d\tilde{b}_n}{dt}\right)\rightharpoonup\left(\frac{d\tilde{u}'}{dt}, \frac{d\tilde{b}'}{dt}\right) \;{\rm{in}}\; L^2(0,T;L^2),\\
&\left(\tilde{A}_u\tilde{u}_n, \tilde{A}_b\tilde{b}_n\right)\rightharpoonup\left(\tilde{A}_u\tilde{u}', \tilde{A}_b\tilde{b}'\right) \;{\rm{in}}\; L^2(0,T;L^2),\\
& \left(B(\tilde{u}_n, \tilde{u}_n),B(\tilde{b}_n, \tilde{b}_n)\right)\rightharpoonup\left(B(\tilde{u}', \tilde{u}'),B(\tilde{b}', \tilde{b}')\right) \;{\rm{in}}\; L^2(0,T;L^2),\\
& \left(B(\tilde{u}_n, \tilde{b}_n),B(\tilde{b}_n, \tilde{u}_n)\right)\rightharpoonup\left(B(\tilde{u}', \tilde{b}'),B(\tilde{b}', \tilde{u}')\right) \;{\rm{in}}\; L^2(0,T;L^2).
 \end{align*}
So $(\tilde{u}', \tilde{b}')$ is a $H^1$ strong solution to assilation systems.
Due to the uniqueness of solutions, we know that $(\tilde{u}', \tilde{b}')=(\tilde{u}, \tilde{b})$.

For difference quotients equation, we take $(\tilde{u}_1, \tilde{b}_1)=(\tilde{u}, \tilde{b})$, $(\tilde{u}_2, \tilde{b}_2)=(\tilde{u}_n, \tilde{b}_n)$.
$(\bar{u}_n, \bar{b}_n)$ is the corresponding strong solution to the difference quotients equation with viscosity $\mu_n$.

Similar to the estimate of Proposition {\ref{prop.dqs}}, we can get that $(\bar{u}_n, \bar{b}_n)$ is uniformly bounded in $H^1$ respect to $n$.
Then there exists a subsequence such that
\begin{align*}
&(\bar{u}_n, \bar{b}_n)\stackrel{*}{\rightharpoonup}(\check{u},\check{b}) \;{\rm{in}}\; L^\infty(0,T;H^1),\\
&(\bar{u}_n, \bar{b}_n)\rightharpoonup(\check{u},\check{b}) \;{\rm{in}}\; L^2(0,T;H^2).
\end{align*}

Similarly, $(\frac{d\check{u}_n}{dt}, \frac{d\check{b}_n}{dt})$ is also uniformly bounded in $L^2(0,T;L^2)$ respect to $n$.
Combining the Aubin Lemma, we deduce that
$(\bar{u}_n, \bar{b}_n) \rightarrow (\check{u},\check{b})$ in $L^2(0,T;H^1)$. By Lemma \ref{lem2.4}, we get $(\check{u},\check{b})\in C(0,T;H^1)$.
\end{proof}

\section*{Acknowledgments}

This work was supported by the National Natural Science Foundation  of China (No. 12271261), the Qing Lan Project of Jiangsu Province and
Postgraduate Research and Practice Innovation Program of Jiangsu Province (Grant No. KYCX22\_1125).

\section*{Conflict of Interest}
This work does not have any conflicts of interest.

\end{document}